\definecolor{lgray}{gray}{0.75}
\newtheorem{theorem}{\bf Theorem}
\newtheorem{corollary}[theorem]{\bf Corollary}
\newtheorem{lemma}[theorem]{\bf Lemma}
\newtheorem{proposition}[theorem]{\bf Proposition}
\newtheorem{problem}{\bf Problem}
\newtheorem{definition}[theorem]{\bf Definition}
\newcommand{\qed}{\hfill $\Box$ \bigskip}
\newcommand{\cigA}{\chi_{ig}^{A}}
\newcommand{\cigB}{\chi_{ig}^{B}}
\newcommand{\cigAB}{\chi_{ig}^{AB}}
\begin{document}

\title{On game chromatic vertex-critical graphs}
\author{
Marko Jakovac$\,^{a,b}$
\and
Da\v sa \v Stesl$\,^{a,c}$
}

\date{\empty}
\maketitle

\vspace{-5mm}

\begin{center}
$^a$ Faculty of Natural Sciences and Mathematics, University of Maribor, Slovenia\\
Koro\v{s}ka cesta 160, 2000 Maribor, Slovenia \\
\medskip
$^b$ Institute of Mathematics, Physics and Mechanics, Ljubljana, Slovenia\\
Jadranska 19, 1000 Ljubljana, Slovenia\\
\texttt{marko.jakovac@um.si}\\
\medskip
$^c$ Faculty of Computer and Information Science, University of Ljubljana, Slovenia\\
Ve\v{c}na pot 113, 1000 Ljubljana, Slovenia \\
\texttt{Dasa.Stesl@fri.uni-lj.si}
\end{center}

\begin{abstract}
Several games that arise from graph coloring have been introduced and studied.  Let $\varphi$ denote a graph invariant that arises from such a game. If $G$ is a graph and $\varphi(G-x)\neq \varphi(G)=k$, $k \geq 1$, holds true for every vertex $x \in V(G)$, then $G$ is called a $k$-$\varphi$-game-vertex-critical graph. We study the concept of $\varphi$-game-vertex-criticality for $\varphi \in \{\chi_g, \chi_i, \cigA, \cigAB\}$, where $\chi_g$ denotes the standard game chromatic number, $\chi_i$ denotes the indicated game chromatic number and $\cigA$, $\cigAB$ denote two versions of the independence game chromatic number. Since the game chromatic number $\varphi(G-x)$ can either decrease or increase with respect to $\varphi(G)$, we distinguish between lower, upper and mixed vertex-criticality. We show that for $\varphi \in \{\chi_g, \cigA, \cigAB\}$ the difference $\varphi(G)-\varphi(G-x)$, $x \in V(G)$, can be arbitrarily large. A characterization of $2$-$\varphi$-game-vertex-critical and (connected) $3$-$\varphi$-lower-game-vertex-critical graphs for all $\varphi \in \{\chi_g, \chi_i, \cigA, \cigAB\}$ is given. It is shown that $\chi_g$-game-vertex-critical, $\cigA$-game-vertex-critical and $\cigAB$-game-vertex-critical graphs are not necessarily connected. However, it is also shown that $\chi_i$-lower-game-vertex-critical graphs are always connected.
\end{abstract}

\noindent \textbf{Key words}: vertex-criticality, game coloring, game chromatic number, indicated game chromatic number, independence game chromatic number.
\bigskip

\noindent \textbf{AMS subject classification (2020)}: 05C57, 05C15.

\section{Introduction}

The concept of criticality has been explored for many different graph invariants. Perhaps the most important one is the concept of color-critical or chromatic-critical graphs; see for instance the book~\cite{book}. Recall that $G$ is a color-critical graph if $\chi(H)<\chi(G)$ holds true for any proper subgraph $H$ of $G$, where $\chi$ denotes the \emph{chromatic number}\footnote{The chromatic number is the minimum number of colors used in a proper coloring of a graph.}. In addition, the concept of vertex-chromatic-critical (or simply vertex-critical) graphs has aroused the interest of many researchers. Note that $G$ is a vertex-critical graph if $\chi(G-x)<\chi(G)$ holds true for every vertex $x \in V(G)$. The wide interest given to the concept of graph criticality is reflected in many different criticality concepts which have been investigated thus far, one of them being related to the (total) domination game; see~\cite{brklra-2010,dgc-2015,tdgc-2018}.

In this paper we introduce the criticality of graphs with respect to three variations of coloring games, more precisely with respect to the classical coloring game, with respect to the indicated coloring game, and lastly with respect to the independence coloring game.

Coloring games in graphs were introduced independently by Gardner~\cite{ga-81} and Bodlander~\cite{bo-1991}. The introduction of the initial version of the game has led to many investigations and development of various strategies and methods~\cite{bagr-07,faigle,kir-2012}. Several connections between the game chromatic number and the well known graph invariants were discovered~\cite{cs-2013, dizh-99, kiko-2009}. This has triggered the development of several variations of the coloring game~\cite{an-2009,bgk-08,bgj-2019,go-he-2018,kitr-01,neso-01}. A good source to review some of the basic results associated with coloring games are a survey on coloring games~\cite{tz-2015} and a dynamic survey on combinatorial games~\cite{F}.

One of the most intensively studied games is the initial version of the game, simply called the \emph{coloring game}. It is played on a simple finite graph $G$ by two players, Alice and Bob. Both players color the vertices of $G$ using the fixed set of colors $\{1,\ldots ,k\}$. The aim of Alice (the first player) is to color all vertices of $G$ while Bob (the second player) is trying to prevent this from happening. Alice starts the game and in the continuation of the game the players alternate turns. When choosing a color for an individual vertex both players must follow the rules of a proper coloring, i.e., they must color a vertex with a color from the color set $\{1,\ldots ,k\}$ which is different from the colors of its neighbors. If at some point of the game there exists an uncolored vertex, which has all colors from the color set $\{1,\ldots ,k\}$ in its neighborhood, Bob wins the game. Otherwise, if all the vertices of the graph are colored, Alice wins. The minimum number of colors $k$ for which Alice has a winning strategy on $G$ is called the \emph{game chromatic number} of $G$, and is denoted by $\chi_g(G)$.

The second coloring game considered in this paper is the game played by two players on a simple finite graph $G$ with a predefined fixed set of colors $\{1, \ldots ,k\}$. In this variation of the game both players are usually named Ann and Ben. In every round of the game Ann selects a previously uncolored vertex and Ben colors it with any of the available colors that have not been used in its neighborhood. Ann's goal is to achieve a proper coloring of the whole graph, while Ben has the opposite goal. He wants to create such a partial coloring of the vertices in the given graph, such that there exists an uncolored vertex, which has all colors from the color set $\{1, \ldots ,k\}$ in its neighborhood. The minimum number of colors $k$ for which Ann has a winning strategy on $G$, no matter how Ben plays, is called the {\em indicated chromatic number} of $G$, and denoted by $\chi_i(G)$. The described game was introduced by Grzesik under the name the {\em indicated coloring game}~\cite{gr-2012} and has since gained much attention from other authors~\cite{bmd-2021,la-2014,raj-2015,raj-2017}.

The last game investigated in this paper is the game initiated by Bre\v{s}ar and \v{S}tesl~\cite{db-2021} under the name the {\em independence coloring game}. Again, the game is played on a simple finite graph $G$ by two players, called Alice and Bob. It consists of several rounds whereby each round is played with the color of the round (round $1$ with color $1$, round $2$ with color $2$, and so on). In each round of the game players alternate their moves by coloring a previously uncolored vertex of the given graph with the color of the round. The vertices selected in the same round of the game must form an independent set\footnote{An independent set is a set of vertices in a graph such that no two vertices are adjacent.}. To be more precise, a round is completed when there is no longer a vertex that could be colored by the color of the round such that it is not adjacent to any vertex that has already been colored in the same round. The game is over when all vertices are colored. It follows that the total number of rounds and the number of colors that are used in the entire game are the same and the coloring obtained in the game is clearly a proper coloring. Alice wants to finish the game in as few rounds as possible and Bob in as many rounds as possible. The authors introduced four versions of this game depending on who starts each round. In this paper we consider the two natural ones where Alice has the first move. The first version is when every round of the game on a graph $G$ is started by Alice. This version  of the game is called the \emph{A-independence coloring game} and the minimum number of resulting rounds played on $G$ is called the {\em A-independence game chromatic number} of $G$, and denoted by $\cigA(G)$. The second version is when Alice starts the first round of the game, and each further round is started by the player who did not end the previous round. This version of the game is called the {\em AB-independence coloring game}, and the minimum number of resulting rounds played on $G$ is called the {\em AB-independence game chromatic number} of $G$, and denoted by $\cigAB(G)$. In both cases it is assumed that Alice and Bob play optimally in each of their moves.

\section{Vertex-criticality with respect to games}

A (chromatic) vertex-critical graph is a graph in which every vertex is a critical element in terms of the chromatic number of $G$. Thus, $\chi(G-x)<\chi(G)$ for every vertex $x \in V(G)$. It is easy to see that this notion is well defined since using the same coloring with $\chi(G)$ colors on $G-x$ which is used on $G$ yields a proper coloring of $G-x$. Since $\chi(G-x)$ is the minimum number of colors needed for a proper coloring of $G-x$, we have $\chi(G-x) \leq \chi(G)$. It turns out that vertex-critical graphs behave nicely, since if $\chi(G-x)<\chi(G)$ holds true for every vertex $x \in V(G)$, then the decrease can not be by more than $1$. Hence, if $G$ is a vertex-critical graph, then $\chi(G-x)=\chi(G)-1$ holds true for every vertex $x \in V(G)$~\cite{book}.

However, if we turn our attention to game colorings on graphs, then the relation between (game) chromatic invariants of critical graphs and their vertex-deleted subgraphs is more chaotic. When a coloring game is played on a (game-)vertex-critical graph $G$, then the (game) chromatic number of $G-x$ can either decrease or it can increase, which clearly depends on the vertex $x \in V(G)$ being removed. Moreover, the decrease or the increase can even be arbitrarily large. In this paper, we introduce vertex-critical graphs with respect to the three introduced games. Let $\varphi(G)$ denote a game chromatic invariant of a graph $G$. In accordance with the behaviour of some game chromatic invariants we consider three types of game-vertex-criticality. 

\begin{definition}
\label{varphi-lower}
A graph $G$ is \emph{$\varphi$-lower-game-vertex-critical} if $\varphi(G-x) < \varphi(G)$ holds true for every vertex $x\in V(G)$. Moreover, if $\varphi(G)=k$ for some positive integer $k$, then $G$ is called a $k$-$\varphi$-lower-game-vertex-critical graph.
\end{definition}

\begin{definition}
\label{varphi-upper}
A graph $G$ is \emph{$\varphi$-upper-game-vertex-critical} if $\varphi(G-x) > \varphi(G)$ holds true for every vertex $x\in V(G)$. Moreover, if $\varphi(G)=k$ for some positive integer $k$, then $G$ is called a $k$-$\varphi$-upper-game-vertex-critical graph.
\end{definition}

\begin{definition}
\label{varphi-mixed}
A graph $G$ is \emph{$\varphi$-mixed-game-vertex-critical} if $\varphi(G-x) \neq \varphi(G)$ holds true for every vertex $x\in V(G)$, and there exists a vertex $y \in V(G)$ such that $\varphi(G-y) < \varphi(G)$ and a vertex $y' \in V(G)$ such that $\varphi(G-y') > \varphi(G)$. Moreover, if $\varphi(G)=k$ for some positive integer $k$, then $G$ is called a $k$-$\varphi$-mixed-game-vertex-critical graph. 
\end{definition}

Combining all three definitions we give the final definition that unifies the notion of lower-, upper- and mixed-game-vertex-critical graphs.

\begin{definition}
\label{varphi-all}
A graph $G$ is \emph{$\varphi$-game-vertex-critical} if $\varphi(G-x) \neq \varphi(G)$ holds true for every vertex $x \in V(G)$. Moreover, if $\varphi(G)=k$ for some positive integer $k$, then $G$ is called a $k$-$\varphi$-game-vertex-critical graph.
\end{definition}

In other words, Definition~\ref{varphi-all} says that a graph $G$ is $\varphi$-game-vertex-critical if it is a $\varphi$-lower-game-vertex-critical, or a $\varphi$-upper-game-vertex-critical, or a $\varphi$-mixed-game-vertex-critical graph.

Our goal in this paper is to study the concept of $\varphi$-game-vertex-criticality for all $\varphi \in \{\chi_g, \chi_i, \cigA, \cigAB\}$. The core of the paper is divided into three sections. In the following section, i.e.\ Section 3, we consider coloring game-vertex-critical graphs, in Section 4 we consider indicated coloring game-vertex-critical graphs, and finally in Section 5 we consider the two versions of independence coloring game-vertex-critical graphs. We show that the difference $\varphi(G)-\varphi(G-x)$, where $x \in V(G)$ and $\varphi \in \{\chi_g, \cigA, \cigAB\}$, can be arbitrarily large in the positive and negative sense. Finally, we characterize $2$-$\varphi$-game-vertex-critical and (connected) $3$-$\varphi$-lower-game-vertex-critical graphs for all $\varphi \in \{\chi_g, \chi_i, \cigA, \cigAB\}$. We give some results that are unique to every game chromatic invariant considered in the paper. For instance, we give a nice property of $4$-$\chi_i$-lower-game-vertex-critical graphs with respect to their minimum degree. Moreover, we show that $\chi_g$-game-vertex-critical graphs, $\cigA$-game-vertex-critical graphs and $\cigAB$-game-vertex-critical graphs are not necessarily connected, which is not true for $\chi_i$-lower-game-vertex-critical graphs.

\subsection*{Notation}

All graphs considered in this paper are simple, undirected and finite. For a graph $G$ we call $|V(G)|$ the \emph{order} of graph $G$ and $|E(G)|$ the \emph{size} of graph $G$. The notations $\delta(G)$, $\Delta(G)$, $\chi(G)$ stand for the \emph{minimum vertex degree}, the \emph{maximum vertex degree} and the \emph{chromatic number} of $G$, respectively. The \emph{degree} of a vertex $v \in V(G)$ in a graph $G$ is denoted by $d_G(v)$, and the set of its neighbors is denoted by $N_G(v)$ and called the \emph{open neighborhood} of $v$. Moreover, we define $N_G[v]=N_G(v) \cup \{v\}$ as the \emph{closed neighborhood} of $v$. If $d_G(v)=1$, then $v$ is called a \emph{pendant vertex} of $G$. With $d_G(u,v)$ we denote the \emph{distance} between vertices $u$ and $v$, i.e.\ the number of edges on a shortest path between $u$ and $v$. We will simply write $d(v)$, $N(v)$, $N[v]$ and $d(u,v)$ if $G$ is the only graph considered.

Notations $P_n$, $C_n$ and $K_n$ stand for the path, the cycle, and the complete graph of order $n$, respectively. We denote by $K_{m,n}$ the complete bipartite graph of order $m+n$. Moreover, we call $K_{1,n}$ the star graph of order $n+1$. Let $\mathcal{M}$ be a perfect matching\footnote{A perfect matching of a graph is an independent set of edges in which every vertex of the graph is incident to exactly one edge of the matching.} of $K_{n,n}$. Then we denote by $K_{n,n}-\mathcal{M}$ the complete bipartite graph of order $2n$ without the perfect matching $\mathcal{M}$. If $x \in V(G)$, then $G-x$ denotes the graph obtained from $G$ by removing the vertex $x$ and all of the edges incident with $x$. Finally, suppose that $G$ is a graph of order $n$ and $u$ an extra vertex that is adjacent to all vertices of $G$. We denote the obtained graph of order $n+1$ by $G^u$, where $u \in V(G^u)$ is a \emph{universal vertex} of $G^u$. If $G$ already has at least one universal vertex, then $G^u$ has at least two universal vertices.

\section{The coloring game-vertex-critical graphs}

It is well known that vertex-critical graphs (with respect to the usual chromatic number) are connected~\cite{book}. Unfortunately, that is not always true for $\chi_g$-game-vertex-critical graphs.

\begin{proposition}
\label{game-disconnected}
For every $k \geq 4$ there exists a $k$-$\chi_g$-game-vertex-critical, disconnected graph.
\end{proposition}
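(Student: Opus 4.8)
The goal is, for each $k\ge 4$, a disconnected graph $G$ with $\chi_g(G)=k$ and $\chi_g(G-x)\neq k$ for every $x\in V(G)$. The engine behind everything is the behaviour of $\chi_g$ under disjoint union. First, if $G$ is the disjoint union of $G_1,\dots,G_m$, then $\chi_g(G)\ge\max_i\chi_g(G_i)$: the side that would lose on some $G_i$ with fewer colours simply runs its single-component strategy inside $G_i$ and regards every move the opponent makes in another component as a harmless pass, an extra free move only helping. The decisive point, however, is that this inequality is frequently \emph{strict}. There are ``fragile'' graphs $F$ on which Alice wins with $k$ colours when she opens the game, yet loses with $k$ colours the moment Bob is handed the first move; on a disjoint union Bob can engineer precisely that, answering Alice's opening move in one component by opening a second component himself, thereby stealing the initiative there. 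This stolen first move is what lets $\chi_g$ of a union climb above the maximum over its components, and it is also the reason a single vertex deletion can \emph{raise} $\chi_g$ rather than lower it.

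Because I need every deletion to change the value, I would look for $G$ with as much symmetry as possible, and the cleanest choice is to double a vertex-transitive fragile gadget: take $G=F\cup F$ with $F$ vertex-transitive. Then $\mathrm{Aut}(F\cup F)$ acts transitively on all of $V(G)$, so up to isomorphism there is only \emph{one} graph of the form $G-x$, and checking criticality collapses to the single statement $\chi_g\bigl(F\cup(F-x)\bigr)\neq\chi_g(F\cup F)$. A concrete candidate, and the one the paper's notation points to, is the cocktail-party graph $F=K_{n,n}-\mathcal M$, the complete multipartite graph with $n$ parts of size two, which is vertex-transitive. Its only colouring slack is that two matched, non-adjacent vertices may share a colour, so it is exactly the kind of graph where losing a tempo lets Bob ``split'' a colour class and overload a common neighbour; deleting a vertex destroys the balance that keeps Alice ahead. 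I would choose $n=n(k)$ so that $\chi_g(F\cup F)=k$, which, together with $\chi(K_{n,n}-\mathcal M)=n$ and the initiative boost from doubling, should realize every value $k\ge 4$.

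The verification then splits into a small number of game-value computations, each carried out by exhibiting explicit strategies and matching them with the union inequality for the lower bounds. I would (i) determine $\chi_g(K_{n,n}-\mathcal M)$ and, more importantly, establish its fragility, namely that Alice wins with the target number of colours only while retaining the opening move; (ii) prove $\chi_g(F\cup F)=k$, the upper bound coming from an Alice strategy that always replies in the component Bob just played in and uses the matched-pair slack to refuse to surrender the initiative; and (iii) prove $\chi_g(F\cup(F-x))\neq k$, where deleting $x$ removes one vertex of a pair and lets Bob either seize the opening in the damaged copy and force a split colour class onto a vertex of degree at least $k$ (raising the value to $k+1$) or, depending on parities, lose the threat entirely (lowering it); either way the value moves off $k$.

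The main obstacle, by a wide margin, is step (iii) together with the strict-increase half of (ii): the disjoint-union inequality is soft, but the strict direction must survive Alice \emph{interleaving} her moves between the two copies instead of answering locally, so Bob's initiative-stealing response has to be specified uniformly against every interleaving. This is exactly what the vertex-transitivity and the uniform pair structure of $K_{n,n}-\mathcal M$ are meant to make possible. I also expect the hypothesis $k\ge 4$ to fall out here: a vertex can be trapped with $k$ colours only if it has degree at least $k$, so the amplifier must contain vertices of degree $\ge k$ and enough matched pairs for the splitting argument to bite, and this room first appears at $k=4$. Pinning down the exact function $n(k)$ and confirming that the doubled graph hits $\chi_g=k$ (and not $k+1$) for the intended $n$ is the remaining bookkeeping.
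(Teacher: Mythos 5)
You have the right construction (two disjoint copies of $K_{n,n}-\mathcal{M}$, with $n=k$) and the right mechanism (whoever is forced to open a copy surrenders the initiative there), which is exactly the paper's approach; but your proposal rests on a lemma that is false, and the decisive computation is left open. The inequality you call ``the engine behind everything,'' $\chi_g(G_1\cup\cdots\cup G_m)\ge\max_i\chi_g(G_i)$, does not hold: in the colouring game no player may pass, so a player cannot run a single-component strategy while regarding the opponent's moves elsewhere as harmless free moves --- being forced to move inside a component turns a responder into a leader, and response-based strategies (such as Bob's pairing strategy on $K_{k,k}-\mathcal{M}$) collapse when their owner must lead. Indeed, the failure of this inequality is precisely what the proposition exploits: the paper shows $\chi_g\bigl((K_{k,k}-\mathcal{M}-x)\cup(K_{k,k}-\mathcal{M})\bigr)=3$ even though the second component alone has game chromatic number $k$. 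So you cannot invoke the union inequality ``for the lower bounds.'' The one lower bound you need, $\chi_g(F\cup F)\ge k$, must be argued directly: Bob answers each Alice move by colouring the matched partner in the same copy with the \emph{same} colour, which confines every colour to a single matched pair within each copy, so each copy consumes $k$ distinct colours; the matching upper bound is immediate from $\Delta(F\cup F)=k-1$, no strategy required.

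The criticality step (your (iii)) is where the proposal stops being a proof. You only assert that deleting $x$ moves the value off $k$, ``either to $k+1$ or lower, depending on parities.'' The raising branch is impossible: every vertex of $F\cup(F-x)$ has degree at most $k-1$, hence $\chi_g\le k$, and there is no vertex of degree at least $k$ onto which a ``split colour class'' could be forced. What must be shown --- and what the paper shows --- is that the value collapses to $3$ for every $x$: Alice first colours the deleted vertex's partner $y$, which now dominates the entire opposite side of the damaged copy; she then controls parity by always answering in the component Bob just played in, so Bob is eventually forced to open the intact copy, whereupon Alice colours his vertex's partner with a \emph{different} colour, reserving a permanently safe colour for each side of each copy. (The lower bound $\chi_g(G-x)\ge3$ holds because the untouched copy is connected and not a star; the paper's own appeal to a $C_3$ subgraph is in fact a slip, since $K_{k,k}-\mathcal{M}$ is bipartite.) This collapse to $3$ is also the true reason for the hypothesis $k\ge4$: one needs $k\neq3$, not room for degree-$k$ vertices. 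Finally, $K_{n,n}-\mathcal{M}$ is the bipartite crown graph, not the cocktail-party graph: its chromatic number is $2$, so your claim $\chi(K_{n,n}-\mathcal{M})=n$ is wrong (it is $\chi_g$ that equals $n$), and your description of its colouring slack fits $K_{2n}$ minus a perfect matching, a different graph whose game behaviour is entirely different.
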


\begin{proof}
Let $k \geq 4$ and let $G$ be two disjoint copies of the graph $K_{k,k}-\mathcal{M}$, where $K_{k,k}$ denotes the complete bipartite graph and $\mathcal{M}$ is a perfect matching in $K_{k,k}$. Clearly, $\chi_g(G)=k$ (whichever vertex Alice colors, Ben responds by coloring the opposite vertex along the missing edge of $\mathcal{M}$). Now, let us remove an arbitrary vertex $x \in V(G)$ and denote with $y \in V(G)$ the vertex that lies on the opposite side of the vertex $x$ along the missing edge of $\mathcal{M}$. Suppose that Alice and Bob play a coloring game on the graph $G-x$ with the color set $\{1,2,3\}$. Alice has the first move in which she colors with color $1$ the vertex $y$. Then the vertices of the partite set that contained $x$ can no longer receive color $1$. Note that Bob does not want to be the first to play in the second copy of $K_{k,k}-\mathcal{M}$, because then Alice would color (with a different color than Bob) the vertex which lies on the opposite side along the missing edge of $\mathcal{M}$, thus ensuring that only two colors would be needed to color the second copy of $K_{k,k}-\mathcal{M}$. Therefore, Bob's optimal next move is to color with color $2$ a vertex that lies in the same partite set as the vertex $y$. However, Alice responds by coloring with color $3$ the vertex that lies on the opposite side of Bob's last colored vertex along the missing edge of $\mathcal{M}$. This ensures that three colors will suffice to color the first copy of $K_{k,k}-\mathcal{M}$. Since Bob has the next move and even number of vertices of the first copy remain uncolored, Bob will automatically have the first move in the second copy of $K_{k,k}-\mathcal{M}$. Hence, Alice wins the game using three colors and $\chi_g(G-x) \leq 3$. Since $G-x$ contains $C_3$ as a subgraph, $\chi_g(G-x)=3$. Because this holds for every vertex $x \in V(G)$, $G$ is a $k$-$\chi_g$-game-vertex-critical, disconnected graph.
\qed
\end{proof}

To be more precise, the graphs considered in the proof of Proposition~\ref{game-disconnected} are $\chi_g$-lower-game-vertex-critical graphs. The proof also shows that the difference between the game chromatic number of such graphs and the game chromatic number of its vertex-deleted subgraphs can be arbitrarily large.

\begin{proposition}
\label{game-lower}
Let $n \geq 1$ be a positive integer. There exists a $\chi_g$-lower-game-vertex-critical graph $G$, such that $\chi_g(G)-\chi_g(G-x)=n$ for every vertex $x \in V(G)$.
\end{proposition}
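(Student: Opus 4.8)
The plan is to reuse the family of graphs constructed in the proof of Proposition~\ref{game-disconnected}, choosing its parameter so that the prescribed gap $n$ appears. Given $n \geq 1$, I would set $k = n + 3$; then $k \geq 4$, so the hypothesis of Proposition~\ref{game-disconnected} is met, and I take $G$ to be the disjoint union of two copies of $K_{k,k} - \mathcal{M}$, where $\mathcal{M}$ is a perfect matching of $K_{k,k}$.

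From the proof of Proposition~\ref{game-disconnected} I would lift two facts about this $G$ verbatim. First, $\chi_g(G) = k$: Bob secures this by answering each of Alice's moves with the vertex on the opposite end of the corresponding missing matching edge. Second, $\chi_g(G-x) = 3$ for every vertex $x \in V(G)$: Alice's three-color strategy on $G - x$ opens by coloring the vertex $y$ lying opposite $x$ along the missing matching edge, and the parity of the remaining vertices forces Bob to be the first to play in the untouched copy, after which three colors suffice. The remark following Proposition~\ref{game-disconnected} moreover records that every such $G$ is $\chi_g$-lower-game-vertex-critical, which is exactly the type of criticality asserted here.

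With these two facts in hand the statement is pure arithmetic: for every $x \in V(G)$ we get
\[
\chi_g(G) - \chi_g(G-x) = k - 3 = n ,
\]
and since this value is the same for all vertices, $G$ is a $\chi_g$-lower-game-vertex-critical graph realizing the required uniform difference $n$.

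Because the substantive work—computing $\chi_g(G) = k$ and establishing the common value $\chi_g(G-x) = 3$—is already carried out in Proposition~\ref{game-disconnected}, I do not anticipate a genuine obstacle; the only thing to verify is that the choice $k = n + 3$ calibrates the gap to exactly $n$ for every $n \geq 1$, including the boundary case $n = 1$ (where $k = 4$). The one point that deserves care when quoting the earlier proof is the lower bound $\chi_g(G-x) \geq 3$, since $G - x$ remains bipartite and hence triangle-free; here I would simply rely on the analysis already given for Proposition~\ref{game-disconnected} rather than reprove it.
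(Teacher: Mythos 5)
Your proposal is correct and takes essentially the same approach as the paper: the paper's own proof also just calibrates the construction behind Proposition~\ref{game-disconnected}, taking $G=K_{n+3,n+3}-\mathcal{M}$ (a single copy rather than your two disjoint copies, a difference that is immaterial here) and invoking the same two facts $\chi_g(G)=n+3$ and $\chi_g(G-x)=3$ before doing the same arithmetic. Your closing caveat is well placed, since the paper's stated lower bound via a $C_3$ subgraph cannot be right for a bipartite graph; the correct justification, in both your version and the paper's, is that $G-x$ contains a connected non-star component (indeed a $P_4$), so $\chi_g(G-x)\geq 3$.
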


\begin{proof}
Let  $n \geq 1$ be a positive integer and $G=K_{n+3,n+3}-\mathcal{M}$, where $\mathcal{M}$ is a perfect matching. With the same reasoning as in the proof of Proposition~\ref{game-disconnected} we conclude that $\chi_g(G)=n+3$ and $\chi_g(G-x)=3$ for every $x\in V(G)$. Thus,
$$\chi_g(G)-\chi_g(G-x)=(n+3)-3=n$$
for every vertex $x\in V(G)$.
\qed
\end{proof}

Further we give an infinity family of $\chi_g$-upper-game-vertex-critical graphs. We again show that the difference between the game chromatic number of such graphs and the game chromatic number of its vertex-deleted subgraphs can be arbitrarily large in the negative sense.

\begin{proposition}
Let $n \geq 1$ be a positive integer. There exists an $\chi_g$-upper-coloring-game-vertex-critical graph $G$, such that
$$\left\{\chi_g(G)-\chi_g(G-x) \, | \, x \in V(G)\right\}=\{-1,-n\}.$$
\end{proposition}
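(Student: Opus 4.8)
The plan is to build $G$ from the same workhorse $K_{k,k}-\mathcal{M}$ used in Proposition~\ref{game-lower}, whose game chromatic number is $k$ while the deletion of one vertex already drops it to $3$, and to hide this large gap behind a universal vertex. Concretely, I would take $G=(K_{n+4,n+4}-\mathcal{M})^{u}$, the core $K_{n+4,n+4}-\mathcal{M}$ together with one universal vertex $u$. The governing principle is that for these almost-complete-bipartite graphs the value of the game is decided by \emph{who is forced to make the first move inside the core}: if Alice must open it, Bob mirrors along the missing matching and drives the cost up to the order of $k$, whereas if Bob is forced to open it, Alice mirrors and the core costs only a bounded number of colors. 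The universal vertex is exactly a ``tempo'' move: coloring it first lets Alice pass the initiative, so that Bob must open the core and it stays cheap. I would first prove $\chi_g(G)=4$, the upper bound by Alice's strategy (color $u$, then answer each of Bob's core moves by coloring the opposite endpoint of the corresponding missing edge), and the lower bound by Bob's strategy (color two vertices of one side with distinct colors, so that a common neighbour together with $u$ is forced onto a fourth colour).

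Next I would analyse the two types of vertex deletions. Removing $u$ destroys Alice's tempo move, so she is compelled to open the full balanced core herself; invoking $\chi_g(K_{n+4,n+4}-\mathcal{M})=n+4$ gives $\chi_g(G-u)=n+4$, hence the difference $4-(n+4)=-n$. Removing any core vertex $x$ turns the core into a vertex-deleted copy and at the same time flips the global parity of the game; the aim is to show that these two effects combine to raise the value by exactly one, i.e.\ $\chi_g(G-x)=5$, giving the difference $4-5=-1$. Putting the pieces together, every deletion strictly increases $\chi_g$, so $G$ is $\chi_g$-upper-game-vertex-critical, and the set of differences is precisely $\{-1,-n\}$ (degenerating to $\{-1\}$ in the boundary case $n=1$, which is consistent with the statement).

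The hard part will be the core-vertex deletions, namely proving that the increase is \emph{exactly} one and neither collapses to $0$ nor leaps up together with the reservoir to $n+4$. Because the coloring game is acutely sensitive to turn parity, deleting a core vertex simultaneously creates a new vertex that is complete to one side and reverses whose turn it is to enter the balanced part, and these effects interact: now there are two ``tempo'' vertices ($u$ and the freshly complete vertex), so one must verify that Alice can still arrange for Bob to open the large balanced block (keeping it cheap) while the new imbalance together with $u$ costs precisely one extra colour. This requires a careful, move-by-move account of the tempo game — tracking which player is forced to open each balanced block — supported by explicit matching-based strategies for the upper bound (Alice) and the lower bound (Bob); isolating this single extra colour from the potential $+n$ jump is the technical heart of the argument. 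If the bare construction $(K_{n+4,n+4}-\mathcal{M})^{u}$ does not decouple the two effects cleanly, the natural remedy is to augment it by a bounded parity-controlling gadget, and the same tempo analysis then carries through; the remaining bookkeeping (fixing the core size so the large jump is $n$, and the degenerate case $n=1$) is routine.
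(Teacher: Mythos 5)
Your construction and the guiding ``tempo'' idea are exactly the paper's (the paper takes $G=(K_{n+3,n+3}-\mathcal{M})^u$), but the anchor values you assign are wrong, and the error sits precisely in the step you treat as easy. For $G=(K_{k,k}-\mathcal{M})^u$ one has $\chi_g(G)=3$, not $4$. Your lower-bound strategy for Bob (``color two vertices of one side with distinct colors'') cannot be carried out against Alice's optimal play: Alice colors $u$ with color $1$; Bob's reply must be a core vertex, say $a_1$, colored $2$ (color $1$ is blocked by $u$); Alice at once colors its non-neighbor $b_1$ across the missing matching edge with color $3$. From that moment every uncolored vertex on $a_1$'s side sees colors $1$ (from $u$) and $3$ (from $b_1$) and hence has the unique legal color $2$, and symmetrically the other side is locked to color $3$; the game finishes with three colors no matter how Bob plays, and Bob never gets the chance to place a second distinct color on a side. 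The same off-by-one infects the vertex-deleted graphs: for a core vertex $x$ the true value is $\chi_g(G-x)=4$, not $5$. Consequently your graph $(K_{n+4,n+4}-\mathcal{M})^u$ has difference set $\{3-4,\,3-(n+4)\}=\{-1,-(n+1)\}$ rather than $\{-1,-n\}$: it fails outright for $n=1$, and for the other values of $n$ the index must be shifted, i.e.\ the core should be $K_{n+3,n+3}-\mathcal{M}$, which is exactly the paper's choice.

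Beyond the numerical slip, the proposal leaves unproven what is in fact the bulk of the paper's proof: that deleting a core vertex changes the value by exactly one. You explicitly defer this (``a careful, move-by-move account,'' possibly ``a bounded parity-controlling gadget''), but no such account is given, and the paper's argument consists almost entirely of it --- a case distinction over Alice's possible first moves in $G-x$ (on $u$, on the mirror vertex $y$ of $x$, in $x$'s former side, or in $y$'s side), a pairing strategy for Bob that forces a fourth color in each case, and then an explicit strategy showing Alice wins with four colors. As it stands, the proposal asserts the right kind of statement with the wrong numbers, supports the lower bound with a strategy that fails, and omits the analysis that constitutes the technical heart of the proof.
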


\begin{proof}
Let  $n \geq 1$ be a positive integer. Suppose that $G=(K_{n+3,n+3}-\mathcal{M})^u$, where $\mathcal{M}$ is a perfect matching and $u$ the universal vertex of $G$. If Alice colors the vertex $u$ in her first move, then Bob has to take all of his moves in $K_{n+3,n+3}-\mathcal{M}$ and Alice always responds with a different color on the vertex that lies on the opposite side along the missing edge of $\mathcal{M}$. Clearly, Alice wins the game on $G$ using three colors, and thus $\chi_g(G) \leq 3$. Since $G$ contains $C_3$ as a subgraph, $\chi_g(G)=3$. If we remove $u$ from $G$, we obtain the graph $K_{n+3,n+3}-\mathcal{M}$ for which $\chi_g(K_{n+3,n+3}-\mathcal{M})=n+3$ (see Proposition~\ref{game-lower}). This already proves that
$$\chi_g(G)-\chi_g(G-u)=3-(n+3)=-n.$$

We still need to show that the removal of all other vertices of $G$ also increases the game chromatic number of the vertex deleted subgraph. Let $x \in V(G)$ be an arbitrary vertex different from $u$, and let $y$ be the vertex that lies on the opposite site of vertex $x$ along the missing edge of $\mathcal{M}$. We show that $\chi_g(G-x)=4$. First we prove that Alice does not have a winning strategy in a coloring game on $G-x$ with three colors.

Suppose that Alice plays first on the vertex that is different from $u$ and different from the vertex $y$. Then Bob responds by coloring the vertex that lies on the opposite side of the vertex that Alice colored along the missing edge of $\mathcal{M}$, and he uses the same color that Alice used. With such a move he causes that no other vertex of the graph $G-x$ can receive this color in the continuation of the game. To color the remaining vertices of the graph $G-x$, Alice and Bob need at least three more colors, one for the vertex $u$ and two for the vertices of $(K_{n+3,n+3}-\mathcal{M})-x$. Hence, at least four colors are needed to complete the game on $G-x$.

Now suppose that Alice plays first either on the vertex $u$ or on the vertex $y$. If Alice colors the vertex $u$, then Bob responds by coloring the vertex $y$, or if Alice colors the vertex $y$, Bob responds with the vertex $u$. In either case, Bob has to use a new color. Further, Alice can play in her second move with a third color on an arbitrary vertex which lies in the same partite set that contained vertex $x$. In this case, Bob responds by coloring the vertex lying on the opposite side along the missing edge of $\mathcal{M}$ of the vertex Alice just colored. He uses the same color as Alice. With such a move he ensures that at least one more color will be needed to finish the game on $G-x$. Otherwise, Alice can make her second move in the partite set that contains vertex $y$. She can use the same color that was used on the vertex $y$, or a third color. If Ann uses the same color, then Bob responds by coloring a vertex in the same partite set with the third color. In either case, two different colors will be used in the partite set that contains vertex $y$. Thus, to finish the game on $G-x$ at least one new color is required. Hence, $\chi_g(G-x)\geq 4$.

To prove $\chi_g(G-x)\leq 4$, suppose that Alice continues the game considered in the latter case. She colors with the fourth color the vertex lying opposite to the vertex along the missing edge of $\mathcal{M}$ which was last colored by Bob. In this way, she wins the game on $G-x$ using four colors. Indeed, all vertices lying in the partite set that contains the vertex $y$ can now receive colors $2$ or $3$, and all other vertices can receive color $4$.
\qed
\end{proof}

If $G$ is a connected graph and $\chi_g(G)=2$, then $G$ is clearly bipartite, and if both partite sets contain at least two vertices, than no matter which vertex Alice colors in her first move, Bob responds by coloring a vertex from the same partite set with a different color, and hence at least three colors are needed to finish the game on $G$, which is a contradiction. Therefore, at least one partite set of $G$ must contain exactly one vertex, which means that $G$ must be a star graph $K_{1,n}$ for some $n \geq 1$, and it is easy to see that $\chi_g(K_{1,n})=2$. Hence, we have just characterized all connected graphs $G$ for which $\chi_g(G)=2$. Considering the rules of the coloring game, if $G$ is disconnected, then it can only be a disjoint union of stars and possibly some isolated vertices. From this fact the following proposition immediately follows.

\begin{proposition}
\label{2-game-critical}
Graph $G$ is a $2$-$\chi_g$-game-vertex-critical graph if and only if $G=K_2$.
\end{proposition}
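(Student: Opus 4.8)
The plan is to prove both directions by leaning on the characterization of all graphs with $\chi_g(G)=2$ established in the discussion immediately preceding the statement: a graph $G$ satisfies $\chi_g(G)=2$ if and only if $G$ is a disjoint union of stars together with (possibly) some isolated vertices, where at least one of the stars has an edge. The backward implication is a quick direct check: for $G=K_2$ we have $\chi_g(K_2)=2$, and deleting either of its two vertices leaves a single vertex $K_1$ with $\chi_g(K_1)=1\neq 2$, so every vertex is critical and $K_2$ is $2$-$\chi_g$-game-vertex-critical.

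For the forward implication, suppose $G$ is $2$-$\chi_g$-game-vertex-critical. Then $\chi_g(G)=2$, so by the characterization $G$ is a disjoint union of stars $K_{1,n_1},\ldots,K_{1,n_m}$ with $m\geq 1$, together with $t\geq 0$ isolated vertices. I would argue by eliminating every configuration other than $K_2$, each time exhibiting a single vertex $x$ whose deletion keeps $\chi_g(G-x)=2$, contradicting criticality. First, if $t\geq 1$, I delete an isolated vertex $x$: the remaining graph still contains a star with an edge, so it is again a disjoint union of stars and isolated vertices with an edge, whence $\chi_g(G-x)=2$. Thus $t=0$. Next, if $m\geq 2$, I delete a leaf $x$ of one star $S_i$; whether $n_i\geq 2$ (so $S_i$ shrinks to a smaller star) or $n_i=1$ (so $S_i$ collapses to an isolated vertex), some other star $S_j$ survives, and again $G-x$ is a disjoint union of stars and isolated vertices with an edge, so $\chi_g(G-x)=2$. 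Hence $m=1$ and $G=K_{1,n}$ for some $n\geq 1$.

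Finally, for $G=K_{1,n}$, deleting the center yields $n$ isolated vertices with game chromatic number $1$, so the center is always critical; but deleting a leaf yields $K_{1,n-1}$, which for $n\geq 2$ is again a star with an edge and so has $\chi_g=2$, violating criticality. Therefore $n=1$, i.e.\ $G=K_2$, which completes the forward direction and the proof.

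The argument is essentially routine once the $\chi_g=2$ characterization is in hand; the only point that needs care --- and the step I expect to be the crux --- is the repeated use of the fact that \emph{any} disjoint union of stars and isolated vertices containing at least one edge has game chromatic number exactly $2$. This is what guarantees that each vertex deletion above genuinely leaves $\chi_g=2$ rather than accidentally dropping it to $1$ or raising it to $3$; it is precisely the content of the characterization, so no new game-theoretic analysis is required, and every non-$K_2$ configuration is ruled out by producing one non-critical vertex.
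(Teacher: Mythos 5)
Your proof is correct and follows essentially the same route as the paper's: both invoke the characterization of graphs with $\chi_g=2$ (disjoint unions of stars and isolated vertices) developed just before the statement, and both rule out every configuration other than $K_2$ by exhibiting a vertex whose deletion leaves the game chromatic number at $2$. If anything, your explicit three-step elimination (isolated vertices, then multiple stars, then a single star $K_{1,n}$ with $n\geq 2$) is slightly more careful than the paper's wording, which loosely asserts that removing \emph{any} vertex of such a disjoint union keeps $\chi_g=2$, whereas only the existence of one such vertex is needed (and true).
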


\begin{proof}
It is easy to see that $\chi_g(K_2)=2$ and $\chi_g(K_2-x)=\chi_g(K_1)=1$ for every vertex $x \in V(G)$.

Now suppose that $\chi_g(G)=2$. If $G$ is a disjoint union of stars and possibly some isolated vertices, then removing any vertex from any of those stars or isolated vertices still yields a bipartite graph with the game chromatic number $2$. Thus, $G$ is necessarily a star graph, i.e.\ $K_{1,n}$, $n \geq 1$. Suppose that $n \geq 2$, and let $x$ and $y$ be two distinct pendant vertices of $K_{1,n}$. By removing either one of the vertices $x$ or $y$ from $K_{1,n}$ we obtain the star $K_{1,n-1}$ that still requires two colors in a coloring game. Thus, $n=1$ and $G=K_{1,1}=K_2$.
\qed
\end{proof}

Proposition~\ref{2-game-critical} shows that the notion of $2$-$\chi_g$-game-vertex-criticality is equivalent to the notion of $2$-$\chi_g$-lower-game-vertex-criticality. We also see that the only $2$-$\chi_g$-(lower)-game-vertex-critical graph is connected. But since Proposition~\ref{game-disconnected} shows that this is not always the case in a coloring game, the problem clearly becomes more involved for $k$-$\chi_g$-game-vertex-critical graphs when $k$ is large. This problem already seems challenging for $k=3$. Therefore, we characterize only connected $3$-$\chi_g$-lower-game-vertex-critical graphs.

\begin{theorem}
Let $G$ be a connected graph. Then $G$ is a $3$-$\chi_g$-lower-game-vertex-critical graph if and only if $G$ is $P_4$, $C_3$, or $C_4$.
\end{theorem}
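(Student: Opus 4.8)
The plan is to translate the game-theoretic hypothesis into a purely structural one and then run a short case analysis. Recall from the discussion preceding Proposition~\ref{2-game-critical} that $\chi_g(H)\le 2$ holds if and only if $H$ is a disjoint union of stars; equivalently, $H$ is acyclic and contains no $P_4$ as a subgraph. Being a $3$-$\chi_g$-lower-game-vertex-critical graph means precisely that $\chi_g(G)=3$ and $\chi_g(G-x)\le 2$ for every $x\in V(G)$. So the hypothesis says that $G$ itself is \emph{not} a star forest, while for \emph{every} vertex $x$ the graph $G-x$ is acyclic and $P_4$-free. The entire argument is driven by this last, very restrictive, condition, and I would emphasise that we never invoke subgraph-monotonicity of $\chi_g$ (which fails in general): we use only the exact characterization above, applied separately to each $G-x$.

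For the easy direction I would check the three graphs directly. For $C_3=K_3$ we have $\chi_g(C_3)=3$ and $C_3-x=K_2$; for $C_4$ we have $\chi_g(C_4)=3$ and $C_4-x=P_3=K_{1,2}$; for $P_4=v_1v_2v_3v_4$ we have $\chi_g(P_4)=3$, while deleting an end vertex yields $P_3$ and deleting an inner vertex yields $K_2\cup K_1$. The two values $\chi_g(C_4)=\chi_g(P_4)=3$ follow since three colours always suffice when $\Delta\le 2$, and two colours fail because Bob can colour, with the other colour, a vertex at distance two from Alice's first move, thereby blocking their common neighbour. In every case the vertex-deleted subgraph is a star forest, so $\chi_g(G-x)\le 2$ for all $x$, confirming that all three graphs are $3$-$\chi_g$-lower-game-vertex-critical.

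For the converse, assume $G$ is connected, $\chi_g(G)=3$, and $G-x$ is acyclic and $P_4$-free for every $x\in V(G)$. I split according to whether $G$ contains a cycle. Suppose first that $G$ contains a cycle $C$. Every vertex of $G$ must lie on $C$: a vertex $v\notin V(C)$ would leave $C$ untouched in $G-v$, contradicting the acyclicity of $G-v$. Hence $C$ is a Hamiltonian cycle and $n:=|V(G)|=|V(C)|$. The same principle forbids chords: a chord of $C$ produces a strictly shorter cycle $C'$ with $V(C')\subsetneq V(G)$, and any vertex outside $V(C')$ would leave a cycle in its deletion, which is impossible. Therefore $G=C_n$, and then $G-x=P_{n-1}$ is $P_4$-free only when $n-1\le 3$, i.e.\ $n\in\{3,4\}$, giving $G\in\{C_3,C_4\}$.

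Suppose instead that $G$ is acyclic, hence a tree. Since $G$ is connected and not a star forest, it is a tree of diameter at least $3$, and so it contains a $P_4$ on some four vertices $a,b,c,d$. If some vertex $v$ were outside $\{a,b,c,d\}$, this $P_4$ would survive in $G-v$, contradicting the $P_4$-freeness of $G-v$; hence $V(G)=\{a,b,c,d\}$. A tree on four vertices containing a $P_4$ can only be $P_4$ itself, so $G=P_4$. Combining the two cases yields $G\in\{P_4,C_3,C_4\}$, as required. The step demanding the most care is the reduction in the first paragraph, namely replacing the game condition $\chi_g(G-x)\le 2$ by the two structural conditions ``acyclic'' and ``$P_4$-free'', together with the chord/omitted-vertex counting in the cyclic case; once these are in place, the remainder is just a count of how few vertices a fixed cycle, or a fixed $P_4$, can leave uncovered.
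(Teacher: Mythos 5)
Your reduction hinges on the equivalence ``$\chi_g(H)\le 2$ if and only if $H$ is a disjoint union of stars,'' applied to the graphs $H=G-x$, which may be disconnected (namely when $x$ is a cut vertex of $G$). That equivalence is correct for connected $H$, but it is false in general, and this is a genuine gap in your converse direction. Concretely, $\chi_g(P_4\cup K_1)=2$: with two colors Alice first colors the isolated vertex; then, whichever vertex $w$ of the path $w_1w_2w_3w_4$ Bob colors with whichever color $c$, Alice answers by coloring the unique path vertex at distance two from $w$ with the same color $c$, after which each of the two uncolored vertices sees only the color $c$ and can be finished with the other color no matter who moves. The same mirroring gives $\chi_g(C_4\cup K_1)=2$. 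Hence ``$\chi_g(G-x)\le 2$'' does \emph{not} imply ``$G-x$ is acyclic and $P_4$-free,'' and this invalidates exactly the steps you flag as the crux: in your cyclic case, the assertion that every vertex lies on $C$ (and that $C$ is chordless) invokes acyclicity of $G-v$, which is unjustified when $v$ is a cut vertex --- for instance, if $G$ were $C_4$ with a path of length two attached, deleting the middle vertex of that path leaves $C_4\cup K_1$, whose game chromatic number is $2$, so lower-criticality alone does not force that deletion to be acyclic; in your tree case, the conclusion $V(G)=\{a,b,c,d\}$ invokes $P_4$-freeness of $G-v$ and fails the same way.

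In fairness, the paper's own proof leans on the very claim you quote: it asserts that $G'=G-x$ with $\chi_g(G')=2$ is a star or a disjoint union of stars and isolated vertices, citing the discussion before Proposition~\ref{2-game-critical}, whose statement about disconnected graphs is exactly what $P_4\cup K_1$ refutes. So you have reproduced the paper's reduction (your subsequent cycle/tree dichotomy is in fact tidier than the paper's analysis of how $x$ attaches to the star components), but in both your proof and the paper's the reduction needs repair. Within your framework the fix is to invoke only the correct connected statement ($\chi_g=2$ for a connected graph iff it is a star $K_{1,n}$) by always deleting a vertex whose removal keeps $G$ connected: in the cyclic case, if some $v\notin V(C)$ exists, pick instead a non-cut vertex $u\notin V(C)$ (any non-cut vertex of a leaf block other than the block containing $C$, or $u=v$ if $G$ is $2$-connected); then $G-u$ is connected and contains a cycle, hence is not a star and satisfies $\chi_g(G-u)\ge 3$, a contradiction; chords are excluded the same way since a Hamiltonian cycle plus a chord is $2$-connected, and the final deletion $G-x=P_{n-1}$ concerns a connected graph. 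In the tree case, a tree on at least five vertices containing the path $abcd$ always has a leaf $u\notin\{a,b,c,d\}$ (its leaves cannot all lie in $\{a,b,c,d\}$ because $b,c$ are internal, and a tree whose only leaves are $a$ and $d$ is the path $abcd$ itself), and deleting such a leaf keeps a connected tree containing $P_4$. With these substitutions your argument closes the gap and becomes rigorous precisely where the published proof is not.
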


\begin{proof}
It is again easy to see that $\chi_g(G)=3$ and $\chi_g(G-x)=2$ for every vertex $x \in V(G)$ when $G$ is either $P_4$ or $C_3$ or $C_4$. Let $G$ be a $3$-$\chi_g$-lower-game-vertex-critical, connected graph and let us denote $G'=G-x$, where $x \in V(G)$ is an arbitrary vertex.

First suppose that $\chi_g(G')=2$. Then $G'$ is a star graph or a disjoint union of stars and possibly some isolated vertices. If $G'$ is a disjoint union of stars and possibly some isolated vertices, then $x$ has at least one neighbor in every star component of $G'$ and is adjacent to every isolated vertex, because $G$ is a connected graph. Since $G'$ is a disjoint union of stars and possibly some isolated vertex, $G$ is either $P_4$ itself, which is one of the $3$-$\chi_g$-lower-game-vertex-critical graphs, or it contains the graph $P_4$ as a proper subgraph. In the latter case also $G''=G-w$ contains the graph $P_4$ as a subgraph, where $w$ denotes a pendant vertex in $G$. Suppose that Alice and Bob play a coloring game on $G''$ and Alice makes her first move on any vertex $y \in V(G'')$ which clearly lies on a path $P_4$ in $G''$. Then Bob responds with the vertex $z$ on this path that has a common neighbor with $y$, and he uses a different color than Alice. Sometime during the game between Alice and Bob one of them will have to color the common neighbor of vertices $y$ and $z$ with the third color, and hence $\chi_g(G'') \geq 3$, which means that $G$ is not a $3$-$\chi_g$-lower-game-vertex-critical graph. The other possibility is that $G'$ is a star graph, i.e.\ $K_{1,n}$ for some $n \geq 1$. If $n=1$, then the only two possibilities are either $G=P_3$ or $G=C_3$. Clearly, $C_3$ is the only $3$-$\chi_g$-lower-game-vertex-critical graph in this case. Henceforth we may assume that $n \geq 2$. Let $y$ be the central vertex and $z_1, \ldots, z_n$ the pendant vertices of $G'$.

Suppose that $x$ is adjacent to $y$ in $G$. The vertex $x$ must be adjacent to at least one of the pendant vertices of $G'$, since for otherwise $G$ would also be a star graph and hence $\chi_g(G)=2$, which is a contradiction. If there exists a pendant vertex to which $x$ is not adjacent in $G$, say $z_1$, then $G-z_1$ contains $C_3$ as a subgraph and $\chi_g(G-z_1) \geq 3$, which is again a contradiction since $G$ is a $3$-$\chi_g$-lower-game-vertex-critical graph. Therefore, $x$ must be adjacent to all pendant vertices $z_1, \ldots, z_n$, $n \geq 2$. But in this case, $G$ contains the complete graph $K_4$ as a subgraph and hence $\chi_g(G) \geq 4$, which is not possible. 

Finally, suppose that $x$ is not adjacent to $y$ in $G$, but it must be adjacent to at least one of the pendant vertices $z_1, \ldots, z_n$, $n \geq 2$, since $G$ is connected. If $n=2$, then the only two possible cases are either $G=P_4$ or $G=C_4$, which are both $3$-$\chi_g$-lower-game-vertex-critical graphs. Therefore, assume that $n \geq 3$. Removing any vertex $z_1, \ldots, z_n$ from $G$ in such a way that the remaining graph stays connected yields a graph that contains the path $P_4$ as a subgraph. With the same reasoning as above we conclude that the game chromatic number of such a graph is at least $3$, and hence $G$ is not a $3$-$\chi_g$-lower-game-vertex-critical graph.

The case for $\chi_g(G')=1$ is trivial, since $G'$ is a disjoint union of isolated vertices. Then $G$ must be a star graph for which $\chi_g(G)=2$, which is not possible. This completes the proof. 
\end{proof}

\section{The indicated coloring game-vertex-critical graphs}

It turns out that the indicated coloring game behaves nicer than the classical coloring game, at least for the case of lower-criticality, since all $\chi_i$-lower-game-vertex-critical graph are connected. To prove this we need the following lemma.

\begin{lemma}
\label{max-components}
Let $G_1, \ldots ,G_n$ be connected components of a graph $G$. Then
$$\chi_i(G)=\max\{\chi_i(G_k) \, | \, k=1, \ldots ,n\}.$$
\end{lemma}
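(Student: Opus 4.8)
The plan is to prove the two inequalities $\chi_i(G) \ge \max\{\chi_i(G_k) : k = 1, \ldots, n\}$ and $\chi_i(G) \le \max\{\chi_i(G_k) : k = 1, \ldots, n\}$ separately, exploiting throughout that distinct components are pairwise non-adjacent: the neighborhood of any vertex lies entirely inside its own component, so the blocking condition in one component is unaffected by what happens in the others. Write $m = \max\{\chi_i(G_k) : k = 1, \ldots, n\}$ and let $G_j$ be a component attaining it.

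For the lower bound I would show that $m-1$ colors do not suffice for Ann. Since $\chi_i(G_j) = m$, Ben has a winning strategy in the standalone $(m-1)$-color game on $G_j$. Playing on $G$, Ben replies to every Ann move inside $G_j$ according to that strategy, and colors any vertex Ann selects outside $G_j$ with an arbitrary legal color (if none is legal he has already won). Because no move outside $G_j$ changes the colors appearing in the neighborhood of any vertex of $G_j$, the subsequence of moves inside $G_j$ is exactly a standalone game, so Ben forces a blocked uncolored vertex there. Note that Ann cannot evade $G_j$ forever: the game ends only when all vertices are colored, so she is eventually forced to play in $G_j$. Hence $\chi_i(G) \ge m$.

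For the upper bound I would have Ann finish the components one at a time, say in the order $G_1, G_2, \ldots, G_n$, running a winning strategy on the current component until it is completely colored before moving to the next. By the same independence, the invariant that while Ann works on $G_i$ every other component currently has no colored vertex or else is completely and properly colored is maintained, so no vertex outside $G_i$ can be blocked (a blockable vertex needs its whole palette present among its neighbors, and those neighbors lie in its own component). Thus each component's subgame is a faithful copy of the standalone game and Ann completes it without ever creating a block, winning on $G$ with $m$ colors.

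The main obstacle is hidden in the last step: the strategy Ann runs on a component $G_i$ must use the $m$ colors actually present in the game, whereas the definition only guarantees her a winning strategy with $\chi_i(G_i)$ colors, and $\chi_i(G_i)$ may be strictly smaller than $m$. Ben could therefore introduce a color above $\chi_i(G_i)$ inside $G_i$, which a strategy tailored to $\chi_i(G_i)$ colors need not anticipate. So the crux is to argue that a surplus of colors cannot help Ben, i.e.\ that a winning strategy for Ann persists when the palette grows. Intuitively this should hold because a vertex is blocked only when its entire palette appears in its neighborhood, a condition that is strictly harder to realize as the palette enlarges; making this rigorous, however, requires care --- for instance a coupling in which Ann runs the $\chi_i(G_i)$-color strategy against a ``shadow'' recoloring of Ben's moves into $\{1, \ldots, \chi_i(G_i)\}$, checking that a block in the real game would force a block in the shadow game. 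Maintaining a consistent proper shadow coloring under Ben's use of the extra colors is the delicate point, and it is precisely the monotonicity of $\chi_i$ in the number of colors that the proof must secure; indeed the lemma essentially entails this monotonicity, since placing such a $G_i$ alongside a component of larger indicated game chromatic number would otherwise break the claimed equality.
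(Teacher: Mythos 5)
Your proposal has the same two-part skeleton as the paper's proof: both directions rest on the observation that play inside a component is invisible to the other components, the lower bound coming from restricting a strategy on $G$ to a single component and the upper bound from letting Ann finish the components sequentially. For the lower bound you and the paper argue dually: the paper rearranges/restricts \emph{Ann's} winning strategy on $G$ to get a winning strategy on each $G_k$ with $\chi_i(G)$ colors (hence $\chi_i(G_k)\leq \chi_i(G)$), while you lift \emph{Ben's} winning strategy on the maximizing component $G_j$ with $m-1$ colors (which exists by determinacy of this finite game) to a winning strategy for Ben on all of $G$. Both versions are sound, and your embedding argument for Ben is, if anything, spelled out more carefully than the paper's informal rearrangement.

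The obstacle you isolate in the upper bound is genuine --- and you should know that the paper's own proof does not overcome it. The paper's concluding step (``if $\chi_i(G_k)<\chi_i(G)$ for every $k$, then according to the rearranged coloring strategy $G$ can be colored with less than $\chi_i(G)$ colors'') tacitly assumes that Ann's winning strategy on a component $G_k$ using $\chi_i(G_k)$ colors remains winning when Ben is allowed the larger palette of $\max_j \chi_i(G_j)$ colors. That is exactly the palette monotonicity you describe, and it does not follow from the definition, precisely because Ben, not Ann, chooses the colors: a strategy tailored to a small palette prescribes no answer to Ben playing a color outside it, and the naive shadow recoloring fails for the reason you note (identifying an extra color with an old one can destroy properness of the shadow coloring, invalidating the simulated strategy's assumptions). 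Your further observation that the lemma essentially \emph{entails} this monotonicity is also correct: winning on a disjoint union with $m$ colors restricts to winning on each component with $m$ colors, so the lemma applied to $G_k \cup K_m$ would force Ann to win on $G_k$ with $m > \chi_i(G_k)$ colors. So your write-up rigorously establishes $\chi_i(G)\geq\max_k\chi_i(G_k)$, and the reverse inequality is exactly as incomplete in your proposal as it is in the paper; the difference is that you name the missing step while the paper passes over it in silence.
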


\begin{proof}
A strategy of Ann which produces a coloring of $G$ with $\chi_i(G)$ colors can have Ann jumping back and forth between the components when selecting vertices. But since Ann only selects a vertex in each round of the indicated coloring game and Ben colors it with any available color, the moves in the game played on the disconnected graph $G$ can always be rearranged in such a way that Ann first selects all vertices inside one component, then moves to another component and repeats this procedure until she selects all vertices in every component. When she is selecting vertices in a component she does this in the same order as they were selected in this component in the original strategy. This yields $\chi_i(G_k) \leq \chi_i(G)$ for every $k \in \{1, \ldots ,n\}$. However, if $\chi_i(G_k) < \chi_i(G)$ for every $k \in \{1, \ldots ,n\}$, then according to the rearranged coloring strategy, $G$ can be colored with less than $\chi_i(G)$ colors, which is not possible. Therefore, there exists a component $G_j$ such that $\chi_i(G_j)=\chi_i(G)$, and hence $\chi_i(G)=\max\{\chi_i(G_k) \, | \, k=1, \ldots ,n\}$.
\qed
\end{proof}

\begin{proposition}
\label{lower-connected}
If $G$ is a $\chi_i$-lower-game-vertex-critical graph, then $G$ is connected.
\end{proposition}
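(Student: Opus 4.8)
The plan is to argue by contradiction, leaning entirely on Lemma~\ref{max-components}, which reduces the indicated game chromatic number of a disconnected graph to the maximum over its components. So suppose $G$ is $\chi_i$-lower-game-vertex-critical but disconnected, with connected components $G_1,\ldots,G_n$ where $n\ge 2$. By Lemma~\ref{max-components} I would fix an index $j$ realizing the maximum, i.e.\ $\chi_i(G_j)=\max\{\chi_i(G_k)\mid k=1,\ldots,n\}=\chi_i(G)$.

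The key step is to locate a vertex whose deletion fails to decrease $\chi_i$. Since $n\ge 2$, there is a component $G_i$ with $i\ne j$, and I would pick any vertex $x\in V(G_i)$. Deleting $x$ leaves the component $G_j$ completely untouched, so $G_j$ is still a connected component of $G-x$. The components of $G-x$ are precisely the $G_k$ with $k\ne i$ (in particular $G_j$) together with the connected components of $G_i-x$. Applying Lemma~\ref{max-components} to $G-x$ therefore yields
$$\chi_i(G-x)=\max\{\chi_i(C)\mid C \text{ a component of } G-x\}\ge \chi_i(G_j)=\chi_i(G).$$
This contradicts the defining inequality $\chi_i(G-x)<\chi_i(G)$ of lower-criticality. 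Hence the assumption that $G$ is disconnected is untenable, and $G$ must be connected.

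The main point requiring care — rather than a genuine obstacle — is simply to delete a vertex from a component \emph{other} than a maximizing one; this is possible exactly because disconnectedness forces $n\ge 2$, so at least one component distinct from $G_j$ is available. Note that I need not analyze whether removing a vertex raises or lowers $\chi_i$ in general: the lemma supplies the needed lower bound $\chi_i(G-x)\ge\chi_i(G_j)$ directly, since an intact maximal component survives the deletion. Once Lemma~\ref{max-components} is in hand, the argument is short and no delicate strategy analysis of the indicated coloring game itself is required.
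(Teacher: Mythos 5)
Your proof is correct and follows essentially the same route as the paper: both arguments invoke Lemma~\ref{max-components}, delete a vertex $x$ from a component other than one attaining the maximum, and conclude $\chi_i(G-x)\geq\chi_i(G_j)=\chi_i(G)$, contradicting lower-criticality. Your version is in fact slightly cleaner, since applying the lemma directly to $G-x$ replaces the paper's case analysis of whether $\chi_i(G_\ell)$ drops, stays, or rises after the deletion.
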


\begin{proof}
Let $G$ be a $\chi_i$-lower-game-vertex-critical, disconnected graph and let $G_1, \ldots ,G_n$ be its connected components. Note that, since $G$ is not connected, $n\geq 2$. 

By Lemma~\ref{max-components} we have $\chi_i(G)=\max\{\chi_i(G_k) \, | \, k=1, \ldots ,n\}$ and there exists a component $G_j$ such that $\chi_i(G_j)=\chi_i(G)$. Since $n \geq 2$, we have $V(G-G_j) \neq \emptyset$. Let $x \in V(G-G_j)$, which means that $x \in V(G_\ell)$, $\ell \neq j$. If the removal of a vertex $x$ lowers $\chi_i(G_\ell)$ or if $\chi_i(G_\ell)$ stays the same, then $\chi_i(G-x)=\chi_i(G_j)=\chi_i(G)$, but if the removal of vertex $x$ increases $\chi_i(G_\ell)$, then $\chi_i(G-x) \geq \chi_i(G_j)=\chi_i(G)$. In all cases we get a contradiction to the assumption that $G$ is a $\chi_i$-lower-game-vertex-critical graph. We conclude that $G$ must be connected.
\qed
\end{proof}

Next we characterize the $2$-$\chi_i$-game-vertex-critical graphs. Similarly as for the $2$-$\chi_g$-game-vertex-criticality this shows that the notion of $2$-$\chi_i$-game-vertex-criticality is equivalent to $2$-$\chi_i$-lower-game-vertex-criticality. Recall that the indicated game chromatic number of a connected graph $G$ equals $2$ if and only if $G$ is a bipartite graph~\cite{gr-2012}. (If $G$ is disconnected, then it is a union of bipartite graphs and isolated vertices.)

\begin{proposition}
\label{2-indicated-critical}
Graph $G$ is a $2$-$\chi_i$-game-vertex-critical graph if and only if $G=K_2$. 
\end{proposition}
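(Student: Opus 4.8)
The plan is to prove both implications, with the forward direction being a one-line verification and essentially all of the content lying in the converse.

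For the easy direction I would simply observe that $K_2$ is connected and bipartite with an edge, so $\chi_i(K_2)=2$, while deleting either vertex leaves $K_1$, for which $\chi_i(K_1)=1\neq 2$. Hence every vertex of $K_2$ is critical and $K_2$ is $2$-$\chi_i$-game-vertex-critical.

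For the converse, suppose $G$ is $2$-$\chi_i$-game-vertex-critical, so $\chi_i(G)=2$ and $\chi_i(G-x)\neq 2$ for every $x\in V(G)$. The first step is to note that $\chi_i(G)=2$ forces $G$ to be bipartite and to contain at least one edge: by Grzesik's characterization recalled above together with Lemma~\ref{max-components}, a graph has $\chi_i=2$ exactly when it is a disjoint union of bipartite graphs and isolated vertices containing at least one edge, while an edgeless graph has $\chi_i=1$. Since each $G-x$ is a subgraph of the bipartite graph $G$, it is again bipartite, and applying the same characterization yields the decisive constraint $\chi_i(G-x)\in\{1,2\}$, with $\chi_i(G-x)=2$ precisely when $G-x$ still has an edge. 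This is the crux: in contrast to the general situation treated under mixed-criticality, here the value can never rise above $2$, so $\chi_i(G-x)\neq 2$ can only mean $\chi_i(G-x)=1$.

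It then follows that $G-x$ is edgeless for every $x\in V(G)$, i.e.\ every edge of $G$ is incident with $x$ for every vertex $x$. If $G$ contained an edge $ab$ together with a third vertex $c\notin\{a,b\}$, that edge would survive in $G-c$, contradicting $\chi_i(G-c)=1$; hence $V(G)\subseteq\{a,b\}$ and, since $G$ has an edge, $G=K_2$. The main obstacle — really the only nontrivial point — is justifying that vertex deletion cannot \emph{increase} $\chi_i$ in this regime; this fails for $\chi_i$ in general, but holds here because bipartiteness is preserved under vertex deletion and bipartite graphs satisfy $\chi_i\le 2$. Combining Grzesik's characterization with Lemma~\ref{max-components} is exactly what pins $\chi_i(G-x)$ to $\{1,2\}$ and thereby collapses the criticality condition to the edgeless condition. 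As a by-product the argument shows that $2$-$\chi_i$-game-vertex-criticality coincides with $2$-$\chi_i$-lower-game-vertex-criticality, in agreement with Proposition~\ref{lower-connected}.
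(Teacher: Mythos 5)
Your proof is correct and takes essentially the same approach as the paper: both rely on the characterization of graphs with $\chi_i=2$ as unions of bipartite graphs and isolated vertices containing an edge, both use the fact that vertex deletion preserves this structure so that $\chi_i(G-x)\leq 2$, and both conclude that criticality forces some vertex-deleted subgraph to retain an edge unless $G=K_2$. The only difference is organizational: you apply the criticality condition to all vertices at once and deduce that $G-x$ is edgeless for every $x$, while the paper fixes one vertex $x$, splits into cases on $\chi_i(G-x)$, and then exhibits a second vertex $y$ whose removal leaves an edge -- the underlying argument is identical.
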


\begin{proof}
Obviously, $K_2$ is a $2$-$\chi_i$-game-vertex-critical graph since $\chi_i(K_2)=2$ and $\chi_i(K_2-x)=\chi_i(K_1)=1$ for any $x\in K_2$.  Further, let $G$ be an arbitrary graph, $G\neq K_2$, for which $\chi_i(G)=2$. Then, $G$ is a union of bipartite graphs and isolated vertices, and has at least three vertices. Let $x \in V(G)$. Clearly, also $G-x$ is a union of bipartite graphs and isolated vertices, or $G-x$ is a union of isolated vertices. In the first case $\chi_i(G-x)=2$ and in the latter case $\chi_i(G-x)=1$. If $\chi_i(G-x)=2$, then we immediately get a contradiction. Therefore we assume that $\chi_i(G-x)=1$. Since $G-x$ is a union of isolated vertices, and $x$ is in $G$ adjacent to some of those vertices, $K_2$ must be a proper subgraph of $G$ ($G$ has at least $3$ vertices). Removing a vertex $y \in V(G-K_2)$, yields the graph $G-y$ with $\chi_i(G-y)=2$. It follows that $G$ is not a $\chi_i$-game-vertex-critical graph. 
\qed
\end{proof}

We continue our study with $3$-$\chi_i$-game-vertex-critical graphs. Since we know that $\chi_i$-lower-game-vertex-critical graphs are connected we focus on them.

\begin{theorem}
Graph $G$ is a $3$-$\chi_i$-lower-game-vertex-critical graph if and only if $G$ is an odd cycle.
\end{theorem}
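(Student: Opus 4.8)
The plan is to prove both implications, disposing of the easy direction first and then establishing the structural characterization in the converse.

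First I would treat the direction that an odd cycle is $3$-$\chi_i$-lower-game-vertex-critical. To bound $\chi_i(C_n)$ from above, note that every vertex of $C_n$ has degree $2$, so whenever Ann selects an uncolored vertex at most two of the three colors are forbidden in its neighborhood, leaving Ben a legal color; the game therefore never stalls and $\chi_i(C_n)\le 3$. Since $C_n$ is connected and non-bipartite, the characterization $\chi_i=2\iff\text{bipartite}$ forces $\chi_i(C_n)\ge 3$, whence $\chi_i(C_n)=3$. Deleting any vertex leaves the path $P_{n-1}$, which is bipartite with at least two vertices, so $\chi_i(C_n-x)=2<3$ for every $x$, giving lower-criticality.

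For the converse, let $G$ be $3$-$\chi_i$-lower-game-vertex-critical. By Proposition~\ref{lower-connected} $G$ is connected, and since $\chi_i(G)=3$ it is non-bipartite and hence contains an odd cycle. The key reduction is the following: for each vertex $x$ we have $\chi_i(G-x)<3$, i.e.\ $\chi_i(G-x)\le 2$, so by Lemma~\ref{max-components} together with the characterization $\chi_i=2\iff\text{bipartite}$ every connected component of $G-x$ is bipartite; thus $G-x$ is bipartite and contains no odd cycle. Equivalently, every odd cycle of $G$ must pass through $x$. Ranging over all $x\in V(G)$, I would conclude that every odd cycle contains every vertex, so each odd cycle is spanning; in particular $G$ possesses a Hamiltonian odd cycle $C$ and $n=|V(G)|$ is odd.

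It then remains to rule out further edges. Since all vertices lie on $C$, any edge of $G$ outside $C$ is a chord. A chord whose endpoints are at arc-distance $d$ along $C$, with $2\le d\le n-2$, splits $C$ into two cycles of lengths $d+1$ and $n-d+1$, whose sum $n+2$ is odd; hence exactly one of them is an odd cycle, and it has length strictly less than $n$, so it omits at least one vertex, contradicting the conclusion that every odd cycle is spanning. Therefore $C$ has no chords, every edge of $G$ lies on $C$, and $G=C_n$ with $n$ odd. The main obstacle I expect is this converse, and within it the passage from the single-vertex condition to a global structural statement: turning ``$G-x$ is bipartite for every $x$'' into ``every odd cycle is Hamiltonian'', and then eliminating chords through the parity argument. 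I would also be careful to invoke Lemma~\ref{max-components} and the bipartite characterization correctly, so that $\chi_i(G-x)\le 2$ really forces bipartiteness of the entire deleted graph and not merely of its individual components.
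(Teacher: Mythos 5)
Your proof is correct, and its core is the same as the paper's: lower-criticality forces $G-x$ to contain no odd cycle for every $x\in V(G)$ (via Proposition~\ref{lower-connected}, Lemma~\ref{max-components}, and the bipartite characterization of $\chi_i=2$), so every odd cycle of $G$ must pass through every vertex. The two arguments diverge only in the finishing step. The paper takes $C$ to be a \emph{smallest} odd cycle and says that if $G\neq C$ there is a vertex $x\notin V(C)$, whose removal leaves $C$ intact and gives $\chi_i(G-x)\geq 3$; this tacitly uses minimality of $C$ to rule out the case $V(G)=V(C)$ with extra edges, since a chord of a smallest odd cycle would create a shorter odd cycle. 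You instead extract a Hamiltonian odd cycle and then eliminate chords explicitly by the parity count (the two cycles a chord creates have lengths summing to $n+2$, so exactly one is odd and shorter than $n$, hence non-spanning). Your version is slightly longer but makes explicit the chord case that the paper passes over in silence, and your parity argument is exactly the reason the paper's implicit step is legitimate. One further small difference: for the easy direction you justify $\chi_i(C_n)\leq 3$ by the degree argument (at most two forbidden colors at any selected vertex, i.e.\ $\chi_i\leq\Delta+1$), whereas the paper simply asserts this step; your justification is valid.
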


\begin{proof}
Let $G$ be an odd cycle. It is easy to see that $\chi_i(G)=3$, and $\chi_i(G-x)=2$ for every $x \in V(G)$. Thus, every odd cycle is a $3$-$\chi_i$-lower-game-vertex-critical graph.

Further, let $G$ be a $3$-$\chi_i$-lower-game-vertex-critical graph. This means that $\chi_i(G)=3$ and $\chi_i(G-x) \leq 2$ for any $x\in V(G)$. By Proposition~\ref{lower-connected}, $G$ is connected. Since for every bipartite graph the indicated game chromatic number is $2$, $G$ must contain an odd cycle, and let $C$ be the smallest odd cycle in $G$. If $G \neq C$, then there exists a vertex $x \in V(G-C)$. Clearly, $\chi_i(G-x) \geq 3$ since $C$ cannot be properly colored with two colors. In this case $G$ is not a $3$-$\chi_i$-lower-game-vertex-critical graph. The other possibility is that $G=C$ what we wanted to prove.
\qed
\end{proof}

Further we give a nice property of $k$-$\chi_i$-lower-game-vertex-critical graphs which might help characterizing them for $k \geq 4$.

\begin{proposition}
\label{indicated-property}
Let $G$ be a $\chi_i$-lower-game-vertex-critical graph. Then $d(x)\geq \chi_i(G-x)$ for every $x\in V(G)$.
\end{proposition}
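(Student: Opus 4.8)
The plan is to argue by contradiction, invoking the lower-criticality hypothesis only through the inequality $\chi_i(G-x) < \chi_i(G)$, which holds for every $x \in V(G)$. Suppose that for some vertex $x$ we have $d(x) < \chi_i(G-x)$, and write $m = \chi_i(G-x)$, so that $d(x) \le m-1$. I would derive a contradiction by exhibiting a winning strategy for Ann in the indicated coloring game on $G$ that uses only $m$ colors; this forces $\chi_i(G) \le m = \chi_i(G-x)$, contradicting $\chi_i(G-x) < \chi_i(G)$.

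The strategy for Ann on $G$ is to copy her winning strategy on $G-x$ (which exists because $\chi_i(G-x) = m$) round by round, while postponing the selection of the vertex $x$ to the final round. The crucial point to establish is that, as long as $x$ remains uncolored, the two games stay perfectly synchronized: whenever Ann selects a vertex $v \in V(G-x)$ in the game on $G$, the colored neighbours of $v$ are exactly the same as in the corresponding position of the game on $G-x$, since the only extra neighbour $v$ can have in $G$ is $x$, and $x$ carries no color. Consequently the set of colors available to Ben for $v$ is identical in both games, so Ben's legal responses coincide, and by induction on the number of rounds the colorings produced on $V(G-x)$ agree move for move. In particular, as Ann's strategy wins on $G-x$, no vertex of $V(G-x)$ is ever a blocked vertex during the game on $G$.

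It remains to handle $x$ itself. Because $x$ has at most $d(x) \le m-1$ neighbours, at most $m-1$ of the $m$ colors can ever appear in $N(x)$; hence $x$ can never become blocked, and Ann may safely leave it for last. When she finally selects $x$, Ben is forced to color it, and he can do so since at least $m - d(x) \ge 1$ colors remain available. The resulting coloring of $G$ is proper and uses at most $m$ colors, so $\chi_i(G) \le m$, completing the contradiction. I expect the main obstacle to be the careful bookkeeping in the synchronization argument of the second paragraph, namely verifying by induction that Ann's adaptive strategy for $G-x$ can be transplanted verbatim onto $G$ and that the available-color sets genuinely coincide at every round; once this invariant is set up cleanly, the treatment of the postponed vertex $x$ is immediate from the degree bound $d(x) \le m-1$.
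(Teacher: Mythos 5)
Your proof is correct and follows essentially the same route as the paper's: Ann transplants her winning strategy for $G-x$ onto $G$, postpones $x$ to the very last move, and the bound $d(x)<\chi_i(G-x)$ guarantees a legal color for $x$, yielding $\chi_i(G)\leq\chi_i(G-x)$ (you phrase it as a contradiction, the paper as a contrapositive, which is immaterial). The only difference is that you spell out the synchronization invariant -- that an uncolored $x$ leaves the available-color sets on $V(G-x)$ unchanged -- which the paper leaves implicit.
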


\begin{proof}
We prove the contrapositive statement of our proposition. Assume that there exists a vertex $x\in V(G)$ such that $d(x)<\chi_i(G-x)$, and suppose that Ann and Ben play an indicated coloring game on $G$ using $\chi_i(G-x)$ colors. Clearly Ann has a winning strategy on $G-x$ with $\chi_i(G-x)$ colors. She selects the vertices of $G$ in the same order according to her winning strategy in $G-x$, while avoiding the vertex $x$. In the end of this process all vertices except vertex $x$ are colored with $\chi_i(G-x)$ colors. Finally, Ann selects vertex $x$. Since $d(x)<\chi_i(G-x)$ there exists at least one color from the color set $\{1,2,\ldots ,\chi_i(G-x)\}$ which is legal for $x$ and Ben has to color it with it. In this way, Ann wins the indicated coloring game on $G$ using $\chi_i(G-x)$ colors. Thus, $\chi_i(G)\leq \chi_i(G-x)$, and hence $G$ is not a $\chi_i$-lower-game-vertex-critical graph. 
\qed
\end{proof}

Even though the problem of finding all $4$-$\chi_i$-lower-game-vertex-critical graphs is considerably more challenging, we can show that the removal of an arbitrary vertex from a $4$-$\chi_i$-lower-game-vertex-critical graph lowers its indicated chromatic number by at most one.

\begin{theorem}
\label{4-indicated-lower}
Let $G$ be a $4$-$\chi_i$-lower-game-vertex-critical graph. Then $\chi_i(G-x)=3$ for every vertex $x \in V(G)$.
\end{theorem}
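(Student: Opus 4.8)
The plan is to get the upper bound from criticality and then eliminate the two small values. Since $\chi_i(G)=4$ and $G$ is $\chi_i$-lower-game-vertex-critical, $\chi_i(G-x)<4$, so $\chi_i(G-x)\in\{1,2,3\}$ for every $x\in V(G)$; it therefore suffices to prove $\chi_i(G-x)\geq 3$, and I would rule out $\chi_i(G-x)=1$ and $\chi_i(G-x)=2$ in turn. By Proposition~\ref{lower-connected} I may assume $G$ is connected.

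The case $\chi_i(G-x)=1$ is immediate: then $G-x$ is edgeless, so every edge of $G$ is incident with $x$, and connectedness forces $G=K_{1,n}$ for some $n\geq 1$. Then $\chi_i(G)=2\neq 4$, a contradiction.

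The main case is $\chi_i(G-x)=2$. Since a connected graph has indicated game chromatic number $2$ precisely when it is bipartite, $G-x$ is bipartite; let $A,B$ be its parts. I would reach a contradiction by showing $\chi_i(G)\leq 3$, which is incompatible with $\chi_i(G)=4$. Concretely, I describe a winning strategy for Ann with colors $\{1,2,3\}$. Ann first selects $x$; Ben colors it with some color $\gamma$, and from then on every vertex of $N(x)$ is forbidden $\gamma$. Ann then plays on the bipartite graph $G-x$, processing one component at a time and always selecting a vertex adjacent to an already colored one, steering the coloring toward the canonical $2$-coloring of $A,B$ by the two colors different from $\gamma$. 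The point is that each vertex of $N(x)$ already sees $\gamma$, so $\gamma$ is illegal there and Ben is forced onto one of the other two colors; using the bipartite structure (every vertex has all its remaining neighbors in the opposite part), Ann prevents any uncolored vertex from ever seeing all three colors. The game then finishes with a proper coloring, giving $\chi_i(G)\leq 3$.

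The hard part is verifying this last strategy. One must show that Ann always has a safe selection and, crucially, that Ben cannot exploit the color $\gamma$ on a vertex of $(G-x)\setminus N(x)$ --- where $\gamma$ may be legal --- to accumulate all three colors around some uncolored vertex; controlling the vertices of $N(x)$, which already carry $\gamma$ in their neighborhood, so that they never also acquire both remaining colors is the delicate point. This is precisely where bipartiteness of $G-x$ is indispensable, and it explains why the argument works for $k=4$, where $\chi_i(G-x)\leq 2$ forces $G-x$ to be bipartite, while the analogous step is unavailable for larger $k$ (note also that the naive bound $\chi_i(G)\leq \chi_i(G-x)+1$ cannot simply be invoked, as $\chi_i$ need not be monotone in the number of colors).
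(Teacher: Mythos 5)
Your skeleton is the same as the paper's: by Proposition~\ref{lower-connected} the graph is connected, criticality gives $\chi_i(G-x)\leq 3$, the case $\chi_i(G-x)=1$ collapses to $G=K_{1,n}$ exactly as you say, and for $\chi_i(G-x)=2$ one derives the contradiction $\chi_i(G)\leq 3$ by giving Ann a three-color strategy on $G$ whose first selection is $x$. The problem is that your proposal stops precisely where the proof has to begin. The entire mathematical content of the theorem is the verification you defer ("the hard part is verifying this last strategy"): exhibiting a selection order for Ann and an invariant under which Ben, who chooses \emph{all} the colors, can never make some uncolored vertex see all three colors. Announcing that bipartiteness of $G-x$ is "indispensable" at this point is a statement of the difficulty, not a resolution of it, so what you have written is an outline with a gap where the proof should be.

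Moreover, the one concrete idea you do sketch --- "steering the coloring toward the canonical $2$-coloring of $A,B$ by the two colors different from $\gamma$" --- cannot work as stated. Ann only selects vertices; Ben colors them, and whenever $\gamma$ is legal on a selected vertex (which it is on all of $(G-x)\setminus N(x)$, and may remain so deep into the game) Ben is free to use it, so Ann has no means of confining $G-x$ to the two non-$\gamma$ colors. The invariant the paper maintains is different and genuinely asymmetric in $\gamma$: taking $\gamma=1$, Ann orders her selections (always picking uncolored neighbors of vertices colored $2$ in one part, alternating with uncolored neighbors of vertices colored $3$ in the other part) so that one part of the current component only ever receives colors from $\{1,2\}$ and the other only from $\{1,3\}$; hence color $2$ (respectively $3$) stays legal on every uncolored vertex of the first (respectively second) part. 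The second key observation, absent from your proposal, is what happens when this process stalls: at that moment every vertex colored $2$ or $3$ has a fully colored neighborhood, so these vertices are sealed off from the rest of the game, every uncolored vertex sees only color $1$, and Ann can restart a fresh phase from a neighbor of $x$ or of a $1$-colored vertex, with the roles of the two parts possibly swapped. Iterating phases over each bipartite component, and finishing with the isolated vertices of $G-x$ (which have degree $1$ in $G$), completes Ann's win. Without this invariant-plus-sealing argument, or a workable substitute, your case $\chi_i(G-x)=2$ is unproved.
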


\begin{proof}
Let $G$ be a $4$-$\chi_i$-lower-game-vertex-critical graph. This means that $\chi_i(G-x) \leq 3$ for any $x\in V(G)$. To prove that $\chi_i(G-x)=3$ for any $x\in V(G)$ suppose on the contrary that there exists a vertex $x\in V(G)$ such that $\chi_i(G-x) \leq 2$.

First suppose that $\chi_i(G-x)=2$. Then $G-x$ is a union of (connected) bipartite graphs and isolated vertices. Now assume that Ann and Ben play an indicated coloring game on $G$ using three colors. We show that Ann has a winning strategy on $G$. We chose an arbitrary bipartite subgraph of $G-x$ and denote it by $B$. Let $G_1$ and $G_2$ be both partite sets of $B$. In the first move Ann selects vertex $x$ and without loss of generality suppose that Ben colors it with color $1$. Since $G$ is connected by Proposition~\ref{lower-connected}, $x$ must have a neighbor in $B$, say $y \in G_1$. Ann selects $y$ and Ben colors it either with color $2$ or color $3$, say $2$. In the next move, Ann selects all neighbors of $y$. Ben colors them either with color $1$ or color $3$. In this step, Ann looks at the vertices in $G_2$ which received color $3$ and selects all of their neighbors, which Ben colors either with color $1$ or color $2$. Then she selects neighbors of those vertices in $G_1$ that newly received color $2$. She repeats this procedure by alternatively selecting the neighbors of vertices in $G_1$ which have color $2$ and the neighbors of vertices in $G_2$ which have color $3$ until there are no uncolored neighbors of vertices in $G_1$ and $G_2$ colored with colors $2$ or $3$, respectively. Clearly, all vertices selected by Ann during this process could be colored by Ben, as no vertex in $G_1$ received color $3$ and no vertex in $G_2$ received color $2$. This process stops when all neighbors of vertices with colors $2$ and $3$ are colored. Note that in the last step, the uncolored neighbors of vertices with colors $2$ and $3$ received color $1$, since for otherwise the process would still continue. Hence, the vertices in $B$ that were until this step colored with colors $2$ and $3$ no longer play any role in the remainder of this game. After the first iteration, Ann can select a new neighbor of $x$ in $B$ and repeats this procedure from the beginning. It could happen that there no longer exists an uncolored neighbor of $x$ in $B$, and $B$ is still not completely colored. According to the procedure described above, there must exist an uncolored neighbor of a vertex in $B$ which received color $1$. Ann selects this vertex in her next move and Ben colors it either with with color $2$ or $3$. From this point on Ann's strategy is exactly the same as above -- she alternately selects neighbors of vertices in $G_i$, $i \in \{1,2\}$, which have color $2$ and the neighbors of vertices in $G_j$, $j \neq i$, which have color $3$ until there are no more uncolored neighbors of vertices colored with color $2$ or $3$. If $B$ is still not colored Ann finds another vertex in $B$ colored with $1$ which still has some uncolored neighbors and repeats the process. Since $B$ is connected, Ann will be able to reach all vertices of $B$ with this procedure, and $B$ will be colored with three colors. Ann can now move the game to another bipartite subgraph of $G-x$. Note that $x$ is in $G$ adjacent to all bipartite subgraphs of $G-x$, since $G$ is connected (Proposition~\ref{lower-connected}). Ann can clearly color all bipartite subgraphs of $G-x$ with three colors. What remains are the isolated vertices of $G-x$. In $G$ those vertices have degree $1$, and can easily be colored with three colors by Ben no matter in what order Ann selects them. Thus, Ann wins the game on $G$ using three colors, which contradicts the assumption $\chi_i(G)=4$. 

Now suppose that $\chi_i(G-x)=1$. In this case, $G-x$ is a union of isolated vertices. Since by Proposition~\ref{lower-connected} graph $G$ is connected, $x$ must be adjacent to all of those vertices in $G$. Hence, $G=K_{1,n}$ for some $n \geq 1$. However, $\chi_i(G)=\chi_i(K_{1,n})=2$, which is again a contradiction to $\chi_i(G)=4$.
\qed
\end{proof}

Combining Theorem~\ref{4-indicated-lower} with Proposition~\ref{indicated-property} for a $4$-$\chi_i$-lower-game-vertex-critical graph $G$ we get $d(x) \geq \chi_i(G-x)=3$ for every vertex $x\in V(G)$, which means that $4$-$\chi_i$-lower-game-vertex-critical graphs $G$ have neither pendant vertices nor vertices of degree $2$.

\begin{corollary}
If $G$ is a $4$-$\chi_i$-lower-game-vertex-critical graph, then $\delta(G) \geq 3$.
\end{corollary}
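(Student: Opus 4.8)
The plan is to derive the bound $\delta(G)\geq 3$ directly by chaining together the two results established immediately above, so that essentially no new game-theoretic argument is required.

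First I would observe that a $4$-$\chi_i$-lower-game-vertex-critical graph is in particular a $\chi_i$-lower-game-vertex-critical graph, so that Proposition~\ref{indicated-property} applies and furnishes the degree lower bound $d(x)\geq \chi_i(G-x)$ for every $x\in V(G)$. The role of that proposition is precisely to convert a statement about the drop in the indicated game chromatic number under vertex deletion into a statement about vertex degrees, so it is the bridge I want to exploit here.

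Next I would feed in the exact value of $\chi_i(G-x)$ supplied by Theorem~\ref{4-indicated-lower}, namely $\chi_i(G-x)=3$ for every vertex $x\in V(G)$. Substituting this into the inequality coming from Proposition~\ref{indicated-property} yields $d(x)\geq \chi_i(G-x)=3$ for every vertex $x$, and taking the minimum over all vertices gives $\delta(G)=\min_{x\in V(G)} d(x)\geq 3$, which is exactly the claim.

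The only thing resembling an obstacle lies entirely in the two prerequisite results rather than in the corollary itself: Theorem~\ref{4-indicated-lower} rests on the explicit winning strategy for Ann that rules out the cases $\chi_i(G-x)\in\{1,2\}$, while Proposition~\ref{indicated-property} rests on the ``select every vertex except $x$ according to Ann's winning strategy on $G-x$, then finish on $x$'' argument together with the degree hypothesis. Once those are in hand, the corollary reduces to a single substitution, so I would keep the proof to one or two lines and simply cite both statements.
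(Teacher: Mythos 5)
Your proposal is correct and is exactly the paper's argument: the authors also obtain the corollary by combining Proposition~\ref{indicated-property} with Theorem~\ref{4-indicated-lower} to get $d(x) \geq \chi_i(G-x) = 3$ for every vertex $x \in V(G)$, hence $\delta(G) \geq 3$.
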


\section{The independence coloring game-vertex-critical graphs}

Similarly as in the case of $\chi_g$-game-vertex-critical graphs, we can show that a graph does not need to be connected in order to be a $\cigA$-game-vertex-critical or a $\cigAB$-game-vertex-critical graph.

\begin{proposition}
\label{independenceA,AB-disconnected}
There exists a $3$-$\cigA$-game-vertex-critical and a $3$-$\cigAB$-game-vertex-critical, disconnected graph.
\end{proposition}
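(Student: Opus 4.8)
The plan is to exhibit an explicit disconnected graph and to show that the value $3$ is produced not by any single component but by the interaction between two components, so that deleting any vertex destroys it. I would take $H$ to be the tree consisting of a center $c$ together with two pendant paths $c\,x_1\,y_1$ and $c\,x_2\,y_2$ of length two and one pendant edge $c\,x_3$ (the spider with leg lengths $2,2,1$), and let $G$ be the disjoint union of two copies $H^{(1)}$ and $H^{(2)}$ of $H$. The first thing to establish is a \emph{fragility} lemma for $H$: although $\cigA(H)=2$ when Alice opens (she colors $c$, after which the first round is forced to be $\{c,y_1,y_2\}$ and the complement $\{x_1,x_2,x_3\}$ is independent, giving two rounds), the value jumps to $3$ as soon as the opponent is allowed to make the first move of the first round inside $H$. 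Indeed, a first move on a suitable neighbour of the center, say $x_3$, poisons the center (it makes $c$ illegal for colour $1$), after which the opponent can always complete a maximal independent set of the form $\{x_i,x_j,y_k\}$ whose complement contains an edge $c\,x_\ell$; the leftover $K_2\cup K_1$ then forces two further rounds.

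For the value of $G$ I would argue as follows. Alice has a single first move per round, so in the first round she can colour the center of at most one copy, say $c^{(1)}$, which locks $H^{(1)}$ into two rounds. Bob answers by poisoning the center of the other copy, and the fragility lemma forces $H^{(2)}$ to take three rounds; since neither copy ever needs more than three rounds (the smallest maximal independent set of $H$ has size three), this yields $\cigA(G)=3$. The criticality is where the construction pays off. I would prove a companion \emph{robustness} lemma stating that for every vertex $x$ the graph $H-x$ has value $2$ regardless of who opens it; one checks the six deletions, which produce $2K_2\cup K_1$, $P_4\cup K_1$, $P_5$, or the spider with legs $1,1,2$, all of which are won in two rounds even by the second player. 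Then in $G-x$ Alice spends her first move securing the intact (still fragile) copy while the deleted copy resolves itself in two rounds, so $\cigA(G-x)=2\neq 3$ for every $x$, and $G$ is a disconnected $3$-$\cigA$-lower-game-vertex-critical graph.

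For $\cigAB$ the same graph and the same securing/poisoning strategy should work, so the final step is to repeat the three lemmas while tracking the extra bookkeeping of the AB-rule, namely which player starts each subsequent round. The point to verify is that this parity never rescues Alice: the decisive bad configuration in the poisoned copy is already fixed during the first round, and the remaining $K_2\cup K_1$ needs two more rounds irrespective of who opens them, so Bob still forces three rounds on $G$ and Alice still recovers two on each $G-x$. I expect the main obstacle to be exactly the twin requirement behind the choice of $H$: the component must be fragile (so the union reaches $3$) yet have every single-vertex deletion robust (so the union drops back to $2$). This is a delicate balance — for instance the more symmetric spider with legs $2,2,2$ is fragile but its tip-deletions remain fragile, so it fails criticality — and pinning down a gadget meeting both conditions, together with the careful tempo analysis of the coupled two-component game (and the parity carry-over in the AB-variant), is the crux of the argument.
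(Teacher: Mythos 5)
Your construction is correct, and it takes a genuinely different route from the paper's. The paper uses $G=C_6\cup P_6$, where \emph{each} component individually has independence game chromatic number $3$ (Bob answers Alice's first move by coloring, with color $1$, a vertex at distance three in the same component, trapping the two adjacent intermediate vertices); criticality is then proved by a tempo argument showing that after any vertex deletion Alice can force Bob to be the \emph{first} to use color $1$ on the surviving dangerous component, where the second player can hold the value at $2$. Your example, two copies of the spider $H$ with legs $2,2,1$, inverts this: each copy alone has value $2$ when Alice opens it, and the value $3$ of the union arises purely from interaction, via a pigeonhole on Alice's single first move plus your fragility lemma. Your organization into a fragility lemma and a robustness lemma (all six deletions of $H$ collapse to $2K_2\cup K_1$, $P_4\cup K_1$, $P_5$, or the spider $1,1,2$, each of value $2$ even for the second player) makes the criticality check modular and short, at the cost of having to analyze the coupled two-component game; the paper's choice of well-understood components makes the values easy to quote, at the cost of a longer case analysis over deletions.

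One step, exactly at the tempo issue you yourself flag as the crux, is false as literally stated and needs repair: in $G-x$ the deleted copy does \emph{not} ``resolve itself in two rounds.'' For $x=x_3$ the deleted copy is $P_5=y_1x_1cx_2y_2$, and if Bob is ever allowed two consecutive moves there he colors $y_1$ and then $x_2$ with color $1$; the round-$1$ trace in that copy is then the maximal independent set $\{y_1,x_2\}$, whose complement contains the edge $cx_1$, forcing three rounds. So Alice must actively answer every move Bob makes inside the deleted copy, playing as the second player there -- which is precisely what your robustness lemma licenses. The missing tempo argument is short: after Alice's opening move on $c^{(1)}$, all remaining round-$1$ moves in the intact copy are forced and harmless, so whenever Bob plays one of them Alice can reply in kind (or open the deleted copy safely on its center), and whenever Bob plays in the deleted copy Alice replies there. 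Symmetrically, your fragility lemma needs the easy extension that Bob can complete the poison unilaterally: if, after Bob colors $x_3^{(2)}$, Alice declines to answer in that copy, Bob colors $y_1^{(2)}$, and the two color-$1$ vertices at distance three already trap the adjacent pair $c^{(2)}x_1^{(2)}$. With these two points made explicit your proof is complete, and the AB-variant goes through as you predict, since every decisive configuration is settled in round $1$ and the leftover graphs ($K_2\cup K_1$, $K_{1,2}$, or independent sets) have values insensitive to who starts the later rounds.
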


\begin{proof}
Take, for instance the graph $G$, which is the disjoint union of graphs $C_6$ and $P_6$ and suppose that Alice and Bob play an A-independence or an AB-independence coloring game on $G$. Clearly, three colors are needed to finish the game on $G$. Whichever vertex Alice colors with color $1$, Bob responds by coloring with color $1$ a vertex at distance three to Alice's choice. With such a move he ensures that two more colors will be needed to complete the game. Obviously, three colors are also enough to complete the game. Therefore, $\cigA(G)=\cigAB(G)=3$. To see that the graph $G$ is indeed an A-independence and AB-independence coloring game-vertex-critical graph, let us show that $\cigA(G-x)=\cigAB(G-x)=2$ for an arbitrary vertex $x\in V(G)$. We consider two cases.

If $x\in V(C_6)$, then the graph $G-x$ is the disjoint union of paths $P_5=v_1v_2v_3v_4v_5$ and $P_6=w_1w_2w_3w_4w_5w_6$. In this case, two colors are needed to finish an A-independence and AB-independence coloring game on $G-x$ if Alice starts the game on vertex $v_3$. The moves on $P_5$ with color $1$ are now fixed. Namely, vertices $v_1$ and $v_5$ will be colored with color $1$. Alice can also ensure that Bob will be the first to play with color $1$ on $P_6$. If Bob colors $w_1$ (or $w_6$), Alice responds by coloring $w_5$ (or $w_2$), if Bob colors $w_2$ (or $w_5$), Alice responds by coloring $w_4$ (or $w_3$), and if Bob colors $w_3$ (or $w_4$), Alice responds by coloring $w_5$ (or $w_2$). When they finish using color $1$, the remaining vertices will all receive color $2$ since they form and independent set.

If $x\in V(P_6)$, where $P_6=w_1w_2w_3w_4w_5w_6$, then we distinguish three possibilities. If $x=w_1$ ($x=w_6$), then the graph $G-x$ is a disjoint union of $C_6$ and $P_5$. Alice's first move in an A-independence and AB-independence coloring game on $G-x$ is on vertex $w_4$ ($w_3$). Again, there are two more moves on $P_5$ with color $1$, which means that Alice can ensure that Bob will be the first to use color $1$ on $C_6$. Alice can then respond by coloring a vertex at distance two to Bob's choice, and color it also with color $1$. When they finish using color $1$, the remaining vertices will again all receive color $2$. Thus $\cigA(G-x)=\cigAB(G-x)=2$. The second possibility is that $x=w_2$ ($x=w_5$). In this case, the graph $G-x$ consists of $C_6$, $P_1$ and $P_4$. If Alice colors in her first move a vertex in $P_4$ or $P_1$, then two more moves are possible outside $C_6$ in the first round of the game, so Bob will again be the first to use color $1$ on $C_6$. For the same reason as in the previous case it follows that $\cigA(G-x)=\cigAB(G-x)=2$. The last possible option is that $x=w_3$ ($x=w_4$). Then the graph $G-x$ consists of $C_6$, $P_2$ and $P_3$. Since Alice wants Bob to be the first to make his move with color $1$ on $C_6$, she colors with color $1$ in her first move either the vertex $w_4$ ($w_1$) or the vertex $w_6$ ($w_3$). This will again force Bob to be the first to play on $C_6$ and hence $\cigA(G-x)=\cigAB(G-x)=2$.
\qed
\end{proof}

We further observe that there exist a lower, an upper and a mixed independence coloring game-vertex-critical graph. Moreover, the difference between the independence game chromatic number of a graph and the independence game chromatic number of its vertex deleted subgraph can be arbitrarily large.

\begin{proposition}
\label{independence-game-lower}
Let $n \geq 1$ be a positive integer. There exists a $\cigA$-lower-independence-coloring-game-vertex-critical and a $\cigAB$-lower-independence-coloring-game-vertex-critical graph $G$, such that $\cigA(G)-\cigA(G-x)=n$ and $\cigAB(G)-\cigAB(G-x)=n$ for every vertex $x \in V(G)$.
\end{proposition}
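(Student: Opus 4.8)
The plan is to exhibit the crown graph $G = K_{n+2,n+2}-\mathcal{M}$, in direct analogy with the construction used for $\chi_g$ in Proposition~\ref{game-lower}; only the additive constant changes, because the ``base value'' after deleting a vertex turns out to be $2$ for the independence game rather than $3$. I would write the two partite sets as $A=\{a_1,\dots,a_{n+2}\}$ and $B=\{b_1,\dots,b_{n+2}\}$, where the removed matching $\mathcal{M}$ consists of the edges $a_ib_i$; thus $a_i$ is adjacent to every $b_j$ with $j\neq i$, and $b_i$ is the \emph{unique} non-neighbour of $a_i$. The combinatorial fact I would establish first, and which drives everything else, is a dichotomy: an independent set of $G$ that is maximal among a collection of still-uncolored vertices is either a single matched pair $\{a_i,b_i\}$, or the whole of one currently-uncolored side.

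For the lower bound $\cigA(G),\cigAB(G)\ge n+2$ I would let Bob play the \emph{pairing strategy}: whenever Alice colours a vertex $v$ in the current round, Bob answers by colouring the unique non-neighbour of $v$, namely its $\mathcal{M}$-partner. By the dichotomy this makes the round maximal at once, so each round colours exactly one matched pair and the $n+2$ pairs are consumed one per round. This argument works verbatim for both games: since Bob \emph{ends} every round, in the AB-game the next round is always started by the player who did not end the previous one, i.e.\ again Alice, so the turn pattern coincides with that of the A-game. For the matching upper bound I would use the dichotomy in the other direction: a round that is not a matched pair must colour an entire uncolored side, after which only the opposite side remains, and it is an independent set finished in one further round. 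A short count then shows that no play can exceed $n+2$ rounds, so both invariants equal $n+2$.

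It remains to compute the vertex-deleted values. By the evident symmetry of $G$ (permuting indices simultaneously on both sides and swapping the two sides) it suffices to treat $x=a_1$. Deleting $a_1$ removes the only non-neighbour of $b_1$, so in $G-a_1$ the vertex $b_1$ is adjacent to \emph{all} of $A\setminus\{a_1\}$. I would have Alice open by colouring $b_1$: once $b_1$ carries colour $1$, no vertex of $A\setminus\{a_1\}$ can ever join round $1$, while every remaining vertex of $B$ is non-adjacent to $b_1$ and to the rest of $B$, so round $1$ cannot close until all of $B$ is coloured. This leaves precisely the independent set $A\setminus\{a_1\}$, cleared in round $2$ no matter who moves; hence $\cigA(G-x)=\cigAB(G-x)=2$ (the value is at least $2$ since $G-x$ has an edge). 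Combining, $\cigA(G)-\cigA(G-x)=\cigAB(G)-\cigAB(G-x)=(n+2)-2=n$ for every $x$, and since $2<n+2$ for $n\ge 1$ the graph is lower-critical in both senses.

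The step I expect to be the main obstacle is the upper bound $\cigA(G)\le n+2$: one must check that no clever interleaving by Bob of single-side rounds with matched-pair rounds can stretch the game past $n+2$ rounds. The cleanest route is to observe that as soon as one full side is exhausted the opposite side is an independent set and is cleared in a single round; so the only way to prolong the game is through matched-pair rounds, and a validity check (a single-side round coloured all at once is maximal only when at least two fresh pairs remain) pins the total at exactly $n+2$. A secondary point needing care is verifying that the AB-game turn-passing rule genuinely reproduces the A-game turn order, both under Bob's pairing strategy and under Alice's opening on $b_1$ in $G-x$; in each case the parities work out so that the outcome is unchanged.
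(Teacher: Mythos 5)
Your proposal is correct, and its core --- the crown graph $K_{n+2,n+2}-\mathcal{M}$, Alice's opening move on the matching partner $b_1$ of the deleted vertex in $G-x$, and the resulting two-round game giving $\cigA(G-x)=\cigAB(G-x)=2$ --- coincides exactly with the paper's proof. The one genuine difference is how the value $\cigA(G)=\cigAB(G)=n+2$ is justified: the paper simply cites it from \cite{db-2021}, whereas you prove it from scratch via the dichotomy that every maximal independent set of a (sub)crown is either a matched pair or an entire remaining side, which yields Bob's pairing strategy for the lower bound and a counting argument for the upper bound. Your argument for this is sound, including the parity observation that Bob ends every round under the pairing strategy, so the AB-game turn rule reproduces the A-game order; this makes your write-up self-contained at the cost of extra length, while the paper's route is shorter but leans on the external result. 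One remark: the step you flag as the main obstacle is actually easier than you suggest. Since each round of either game necessarily colors a maximal independent set of the graph induced on the currently uncolored vertices, the dichotomy immediately implies that \emph{no play whatsoever} can last more than $n+2$ rounds (each pair-round removes one pair, and a side-round when $m\geq 2$ pairs remain forces the game to end exactly one round later), so no strategy analysis for Alice is needed for the upper bound at all.
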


\begin{proof}
Let  $n \geq 1$ be a positive integer and $G=K_{n+2,n+2}-\mathcal{M}$, where $\mathcal{M}$ is a perfect matching. Denote the partite sets of $G$ with $A=\{a_1,a_2,\ldots ,a_{n+2}\}$ and $B=\{b_1,b_2,\ldots ,b_{n+2}\}$ and let $\mathcal{M}=\{a_1b_1,a_2b_2,\ldots ,a_{n+2}b_{n+2}\}$. We know that $\cigA(G)=\cigAB(G)=n+2$~\cite{db-2021}. Let $x\in V(G)$ be an arbitrary vertex. Assume that Alice and Bob play an A-independence or an AB-independence coloring game on $G-x$. Without loss of generality, suppose that $x=a_1$. Alice's first move is to color the vertex $b_1$ with color $1$. Then, no vertex of $A$ can be colored with color $1$ since $b_1$ is adjacent to all vertices of $A$ in $G-x$. Therefore, all vertices of $B$ will be colored in the first round of the game and all vertices of $A$ in the second round of the game. It follows that $\cigA(G-x)=\cigAB(G-x)=2$ and we get $\cigA(G)-\cigA(G-x)=\cigAB(G)-\cigAB(G-x)=(n+2)-2=n$. 
\qed
\end{proof}

\begin{proposition}
\label{independence-game-upper}
Let $n \geq 2$ be a positive integer. There exists a $\cigAB$-upper-game-vertex-critical graph $G$, such that
$$\left\{\cigAB(G)-\cigAB(G-x) \, | \, x \in V(G)\right\}=\{-1,-2n+3\}.$$
\end{proposition}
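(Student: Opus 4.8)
The plan is to exhibit a single graph $G$ with $\cigAB(G)=3$ in which one distinguished vertex is responsible for the large jump and every remaining vertex for the jump of size one. Concretely, I would start from $K_{2n,2n}-\mathcal{M}$, whose $AB$-independence game chromatic number equals $2n$ (this is the value computed in Proposition~\ref{independence-game-lower} and in~\cite{db-2021}), and attach to it a controlling vertex $u$ chosen so that $\cigAB(G)=3$ while $\cigAB(G-u)=\cigAB(K_{2n,2n}-\mathcal{M})=2n$. The target differences would then read $\cigAB(G)-\cigAB(G-u)=3-2n=-2n+3$ for the distinguished vertex and $\cigAB(G)-\cigAB(G-x)=3-4=-1$ for every other $x$, which is exactly the set $\{-1,-2n+3\}$ (note that for $n=2$ the two values coincide).

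The engine behind $\cigAB(G)=3$ is the asymmetry of $K_{2n,2n}-\mathcal{M}$ between the two players. If Alice is forced to open the colouring of the bipartite part she loses a colour on every matched pair, since Bob answers $a_i$ by $b_i$; this is how the value $2n$ arises. If instead Bob must open a round inside the bipartite part, Alice can immediately colour a second vertex in the same partite set, after which every cross-edge blocks the opposite set and the whole partite set is forced to receive that colour in a single round. I would therefore have Alice colour the controller $u$ in the first round; since this hands the initiative in the second round to Bob, Alice can fill one partite set in round two and the other in round three, giving $\cigAB(G)\le 3$, while a triangle through $u$ gives $\cigAB(G)\ge 3$. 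Deleting $u$ removes this advantage and returns the pure graph $K_{2n,2n}-\mathcal{M}$, so $\cigAB(G-u)=2n$ and the large jump is immediate.

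The main obstacle is the small jump. I must arrange the attachment of $u$ so that deleting \emph{any} vertex other than $u$ destroys Alice's three-round strategy and forces exactly four rounds, and crucially so that no deletion leaves a configuration Alice can still finish in three. This is delicate precisely because removing a matched vertex from $K_{2n,2n}-\mathcal{M}$ normally \emph{helps} Alice: it creates an unmatched vertex that collapses a whole partite set into a single round (this is exactly the mechanism of Proposition~\ref{independence-game-lower}), so the controller must be attached so that each such deletion instead costs Alice the tempo she relied on rather than granting her a shortcut. For the upper bound I would give Alice an explicit strategy, colour $u$ first and then force the two partite sets, spending one extra round on the vertex left unmatched by the deletion, showing $\cigAB(G-x)\le 4$; for the matching lower bound I would give Bob a pairing-and-tempo strategy showing $\cigAB(G-x)\ge 4$, splitting into the cases $x\in A$ and $x\in B$ and tracking which player is forced to open each round. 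Checking that this holds uniformly over all $x\neq u$, with no exceptional vertex whose deletion keeps the value at three, is the step demanding the most care; once it is in place, collecting the two difference values yields the claimed set, and the fact that all the differences are negative certifies that $G$ is $\cigAB$-upper-game-vertex-critical.
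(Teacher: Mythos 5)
Your overall architecture (a controlling vertex $u$ whose deletion releases the full value $\cigAB(K_{2n,2n}-\mathcal{M})=2n$, while every other deletion moves the value from $3$ to $4$) is the same as the paper's, but the concrete construction you propose fails, and it fails precisely at the step you deferred. Your three-round strategy for Alice on $G$ (colour $u$ first, thereby handing Bob the initiative in round two) forces $u$ to be universal, so your graph is $G=(K_{2n,2n}-\mathcal{M})^u$. For this graph the uniform claim $\cigAB(G-x)=4$ for $x\neq u$ is false: in fact $\cigAB(G-x)=3$ for every $x \neq u$. Indeed, write $x=a_1$ and let $A$, $B$ be the partite sets with $a_1 \in A$. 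In $G-a_1$ the vertex $b_1$ is adjacent to all of $A\setminus\{a_1\}$ and to $u$, so Alice opens round one on $b_1$; the only vertices that may still receive colour $1$ are the remaining vertices of $B$, hence round one colours exactly $B$, which has even size $2n$, so Bob makes the last move of round one and Alice opens round two. She colours $u$, which ends round two at once, so Bob must open round three on $A\setminus\{a_1\}$, an independent set, and the game ends. Since $G-a_1$ contains a triangle through $u$, we get $\cigAB(G-x)=3=\cigAB(G)$, so your graph is not upper-critical at any vertex other than $u$. Note that the deletion does not merely ``help Alice,'' as you feared: it helps her by exactly the right amount and with exactly the right parity, because the dominating-vertex shortcut consumes an even partite set and therefore returns the initiative to Alice, who then spends it on $u$.

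The paper escapes this by gluing \emph{two} blocks to one universal vertex: $G$ is obtained from the disjoint union of $(K_{2n,2n}-\mathcal{M})^u$ and $(K_{6,6}-\mathcal{M}')^v$ by identifying $u=v$. One still has $\cigAB(G)=3$ (after Alice colours $u$, the even partite sets force Bob to be the first to enter each block in round two, and Alice punishes him with a same-partite-set reply), and $\cigAB(G-u)=2n$ via Bob's mirroring strategy on the disjoint union. The role of the second block is exactly to absorb the tempo that your single-block construction hands back to Alice: after deleting $x\neq u$ from one block, Alice's shortcut (playing the vertex opposite $x$) still leaves the entire other block plus $u$ to be contested, and the parity she gains in the damaged block is spent being first into the intact block, where Bob's matching-mirror reply costs her one extra round; the paper's case analysis shows this yields $\cigAB(G-x)=4$ for every $x\neq u$. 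So your plan needs a second block (or an equivalent tempo sink) attached at $u$ before the ``small jump'' can be established; as written, the claim that every deletion other than $u$ forces four rounds is not delicate but simply false.
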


\begin{proof}
Let  $n \geq 2$ a positive integer and $G'$ be the disjoint union of graphs $G_1=(K_{2n,2n}-M)^u$ ($\mathcal{M}$ is a perfect matching in $K_{2n,2n}$) and $G_2=(K_{6,6}-\mathcal{M}')^v$ ($\mathcal{M}'$ is a perfect matching in $K_{6,6}$), where $u$ is the universal vertex in $G_1$ and $v$ is the universal vertex in $G_2$. Graph $G$ is obtained from $G'$ by identifying both universal vertices $u$ and $v$, i.e.\ $u=v$.

First we show that $\cigAB(G)=3$. The optimal first move for Alice in an AB-independence coloring game on $G$ using three colors is to color vertex $u$. Then Bob starts the second round of the game with color $2$ by playing a vertex in either $G_1-u$ or $G_2-u$, say $G_1-u$. Alice responds in her next move by coloring a vertex in the same partite set in $G_1-u$ as Bob played in his first move. Since there are even number of vertices in $G_1-u$, Bob is forced to make his first move with color $2$ in $G_2-u$. Alice again plays in her next move in the same partite set in $G_2-u$ as Bob did. After the second round of the game only one new color is needed to complete the game, since the remaining uncolored vertices form an independent set. Therefore, $\cigAB(G)=3$. 

If we remove the vertex $u$ from $G$ it does not matter where Alice plays in her first move. Without loss of generality assume that Alice makes her first move in $G_1-u$. Bob responds by coloring a vertex lying opposite Alice's colored vertex along the missing edge of $\mathcal{M}$. In this way, no other vertex in $G_1-u$ can receive color $1$. Hence, Alice's next move will be in $G_2-u$. Bob colors next a vertex lying opposite Alice's colored vertex along the missing edge of $\mathcal{M}'$ and the first round is complete. Alice starts the second round of the game and it goes exactly as the first round. It ends with two colored vertices in $G_1-u$ and two colored vertices in $G_2-u$. The continuation of the game goes along the same lines, because Alice starts each new round of the game. The game ends in $2n$ rounds and therefore $\cigAB(G-u)=2n$. This already shows that $\cigAB(G)-\cigAB(G-u)=-2n+3$.

We still need to show that the removal of all other vertices of $G$ also increases the independence game chromatic number of the vertex deleted subgraph. Let $x \in V(G)$ be an arbitrary vertex different from $u$. We will show that $\cigAB(G-x)=4$. If Alice played on $u$ in her first move of the game on $G-x$, Bob would start the second round of the game. He would play a vertex lying in the partite set of $G_1-u$ or $G_2-u$ to which vertex $x$ belongs. Thus, in this partite set, odd number of consecutive moves would be played, which means that Alice would be the first to color the vertices in the second one of the graphs $G_1-u$ and $G_2-u$. Bob would then play on the vertex opposite of Alice's last colored vertex along the missing edge of $\mathcal{M}$ or $\mathcal{M}'$. That would end the second round of the game. Clearly, in the continuation of the game at least four more rounds would be required. So playing $u$ is not an optimal first move for Alice.

It turns out that playing the vertex lying opposite of $x$ along the missing edge of $\mathcal{M}$ or $\mathcal{M}'$ is the optimal first move for Alice. Namely, in this case, in the graph of $G_1-u$ or $G_2-u$, in which $x$ lies, even number of moves are played in the first round (all vertices from the same partite set), so Alice has the first move in the second one of the graphs $G_1-u$ or $G_2-u$. Bob completes this round of the game by playing his next move on a vertex lying opposite to the vertex which Alice colored in her last move. Alice starts the next round by coloring a vertex lying in the same partite set that contained $x$. All vertices of this partite set must thus receive color $2$. Since there are an odd number of those vertices, Bob will be the first to play on the one graph $G_1-u$ or $G_2-u$, which does not contain $x$. Alice answers by coloring a vertex lying in the same partite set, thus causing all the vertices of that partite set to be colored in the second round. After the first two rounds there remains an uncolored set of pairwise non-adjacent vertices that can receive one color and the vertex $u$ that must receive its own color. So the game ends in four rounds. 

If Alice colors in her first move a vertex in the partite set that contained $x$ (without loss of generality say $x \in V(G_2-u)$), then the game also lasts four rounds. Bob clearly colors in his next move the vertex lying opposite Alice's colored vertex along the missing edge of $\mathcal{M}'$. Then Alice must color a vertex in $G_1-u$ and Bob plays again on the vertex lying opposite of Alice's last colored vertex along the missing edge of $\mathcal{M}$. This concludes the first round of the game. Alice starts the second round. She colors the vertex opposite of $x$ along the missing edge of $\mathcal{M}'$. Then all the remaining uncolored vertices of this partite set in $G_2-u$ must be colored in this round. Since there are an odd number of them, Bob will be the first to start coloring the vertices in $G_1-u$ with the color $2$. Alice then plays on a vertex in the same partite set of this graph in which Bob played last. This ensures that all vertices of this partite set will be colored in the second round. Henceforth, the game will last two more rounds. In one of the remaining rounds, vertex $u$ will be colored, and in the other, all the remaining uncolored vertices of $G$. 

Lastly, if Alice starts the game by playing on a vertex lying in the partite set opposite of $x$, but not exactly on the vertex lying opposite of $x$ along the missing edge of $\mathcal{M}'$, the strategy for Alice and Bob is exactly the same as in the previous case. The only difference is that Bob's and Alice's first moves swap. We see that $\cigAB(G-x)=4$, and consequently $\cigAB(G)-\cigAB(G-u)=3-4=-1$, which concludes the proof.
\qed
\end{proof}

\begin{proposition}
\label{independence-game-mixed}
Let $n \geq 2$ be a positive integer. There exists an $\cigAB$-mixed-game-vertex-critical graph $G$, such that
$$\left\{\cigAB(G)-\cigAB(G-x) \, | \, x \in V(G)\right\}=\{1,-2n+3,-2n+1\}.$$
\end{proposition}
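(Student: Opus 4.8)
The plan is to exhibit an explicit graph $G$ and then read off the three differences from a case analysis on the deleted vertex, exactly in the spirit of Propositions~\ref{independenceA,AB-disconnected}, \ref{independence-game-lower} and~\ref{independence-game-upper}. Since the target set $\{1,-2n+3,-2n+1\}$ contains one positive and two \emph{distinct} negative values, $G$ must simultaneously admit a vertex whose deletion lowers $\cigAB$ and vertices whose deletion raises it (this is what Definition~\ref{varphi-mixed} demands), and the two negative values suggest building two ``large bipartite blocks'' of slightly different sizes into $G$. Concretely, I would take the blocks $K_{2n,2n}-\mathcal{M}$ and $K_{2n+2,2n+2}-\mathcal{M}'$, which by the mechanism of Proposition~\ref{independence-game-lower} force $2n$ and $2n+2$ rounds respectively whenever Bob is allowed to answer every move by coloring the matched partner, and attach them to a small controlled parity gadget (playing the role that $P_6$ plays in Proposition~\ref{independenceA,AB-disconnected} and the shared universal vertex plays in Proposition~\ref{independence-game-upper}) whose only purpose is to let Alice dictate who is forced to open each round.

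First I would prove $\cigAB(G)=3$. Alice opens on the control gadget so as to fix the parity of the number of moves played before the blocks are entered; from then on her strategy is reactive -- whenever Bob colors a vertex inside a block, she answers in the same part (or on the matched partner) so that each block is exhausted using only two colors, while the gadget absorbs the remaining color, for a total of three. For the lower bound $\cigAB(G)\ge 3$ I would hand Bob a forcing strategy guaranteeing that two colors never suffice. This is the same two-sided argument (Alice's strategy for the upper bound, Bob's for the lower bound) used throughout Section~5.

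Next I would split $V(G)$ into exactly three classes and compute $\cigAB(G-x)$ on each. For the distinguished control vertex $y'$, I would show that $G-y'$ becomes ``easy'' -- bipartite with Alice's obstructions gone -- so that she finishes in two rounds, giving $\cigAB(G-y')=2$ and the difference $1$. For the vertices whose deletion destroys the parity control in the way that releases the \emph{larger} block, Bob regains his partner-matching strategy and forces $2n+2$ rounds, giving $\cigAB(G-x)=2n+2$ and the difference $-2n+1$; a matching Alice strategy shows $2n+2$ is also an upper bound. For all remaining vertices, the same analysis releases only the block of size $2n$, yielding $\cigAB(G-x)=2n$ and the difference $-2n+3$. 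Finally I would check that these three classes really partition $V(G)$ and that both a decrease (at $y'$) and increases (elsewhere) occur, so that $G$ is $\cigAB$-mixed-game-vertex-critical with difference set exactly $\{1,-2n+3,-2n+1\}$.

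The hardest part will be the parity bookkeeping in the deleted subgraphs. As in Proposition~\ref{independenceA,AB-disconnected}, the decisive quantity is whether Bob can cap each round inside a block at a single matched pair (forcing the full $2n$ or $2n+2$ rounds) or whether Alice can instead pour an entire part into one color class (collapsing the count): this hinges entirely on who is forced to open each round, which in turn is governed by the parity of the number of moves already spent in the control gadget. Deleting different vertices flips this parity, and isolating exactly which deletions free the $(2n+2)$-block versus the $2n$-block -- while proving tight upper and lower bounds move-by-move in each case -- is the delicate step on which the whole proposition rests.
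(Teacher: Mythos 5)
Your plan has a genuine gap, and it is not merely that the ``parity gadget'' is left unspecified (designing that gadget is the whole difficulty); the value profile you are aiming for is structurally impossible for a two-block construction. You want $\cigAB(G)=3$ with vertex deletions producing the values $2$, $2n$ and $2n+2$, where the distinguished gadget vertex $y'$ gives the value $2$ and block-internal vertices give the two large values. But by Theorem~\ref{izrek_dva}, $\cigAB(G-y')=2$ forces $G-y'$ to be bipartite with a dominating vertex $d$, i.e.\ a vertex adjacent to an entire partite set of the bipartition, hence to a full side of each of your two blocks. Now delete any block-internal vertex $x\neq d$: the hub $d$ survives in $G-x$, and Alice simply opens on $d$. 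This excludes a whole side of each block from the first color class and forces the opposite sides (which form an independent set) to be swallowed in round one, after which the game ends in a bounded number of further rounds. Bob's partner-matching trap, which is what should force $2n$ or $2n+2$ rounds, only works when no such hub is available to Alice and when parity forces Alice to open the trap block in every round; so block-internal deletions can never yield the large values, and the large negative differences can only come from deleting hub-type vertices themselves. Your assignment of differences to vertex classes is therefore inverted relative to what is achievable.

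This is exactly how the paper's proof is organized, and it explains why three blocks are needed rather than two: $G$ is built from $(K_{2n,2n}-\mathcal{M})^u$, $(K_{2n+2,2n+2}-\mathcal{M}')^v$ and $(K_{2n+2,2n+2}-\mathcal{M}'')^w$ with the hubs $u,v,w$ joined into a triangle, giving $\cigAB(G)=4$. Deleting any non-hub vertex gives $3$ (difference $+1$), deleting $v$ or $w$ releases the small block as a trap and gives $2n+1$ (difference $-2n+3$), and deleting $u$ releases a large block and gives $2n+3$ (difference $-2n+1$). The triangle of hubs is essential: after one hub is deleted, the two surviving blocks rig the parity so that Alice is forced to open the trap block in every round, which is what makes the large values attainable at all. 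With two blocks and a single shared control vertex you essentially reconstruct the graph of Proposition~\ref{independence-game-upper}, whose difference set is $\{-1,-2n+3\}$ --- upper-critical, not mixed --- so your sketch cannot be completed as stated.
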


\begin{proof}
Let $n\geq 2$ be a positive integer. Let $G_1=(K_{2n,2n}-\mathcal{M})^u$ ($\mathcal{M}$ is a perfect matching), where $u$ is the universal vertex in $G_1$, $G_2=(K_{2n+2,2n+2}-\mathcal{M}')^v$ ($\mathcal{M}'$ is a perfect matching), where $v$ is the universal vertex in $G_2$, and $G_3=(K_{2n+2,2n+2}-\mathcal{M}'')^w$ ($\mathcal{M}''$ is a perfect matching), where $w$ is the universal vertex in $G_3$. Graph $G$ is the graph obtained from graphs $G_1$, $G_2$, $G_3$ by adding edges $uv$, $vw$ and $uw$.

First we show that $\cigAB(G)=4$. Suppose that Alice and Bob play an AB-independence coloring game on $G$. Alice starts the first round of the game. Clearly, Alice should start the game by playing on one of the vertices $u, v$, or $w$. It does not matter on which of these vertices she starts the game. In all three cases the same number of colors are needed to complete the game. Therefore, suppose that Alice colors $u$ in her first move. Now, the remaining vertices in $G_1$, and vertices $v$ and $w$ can no longer receive color $1$. Thus, Bob colors in his first move one of the vertices in $G_2-v$ or $G_3-w$. Say he colors a vertex in $G_2-v$. Alice responds by coloring a vertex in the same partite set. After this move all vertices in this partite set must be colored in the first round of the game. Since the number of vertices in this partite set is even, Bob is forced to be the first to use color $1$ on a vertex in $G_3-w$. Again, Alice responds by coloring a vertex in the same partite set. With such a move she ensures that all vertices of this partite set are also colored in the first round of the game. When all the vertices in this partite set are colored, the first round of the game ends, as no vertex can be colored with color $1$. Since Alice makes the final move in the first round of the game, Bob begins the second round of the game. His goal is to force Alice to be the first to color an uncolored vertex of $G_1-u$, so in his first move he colors one of the vertices $v$ or $w$, say $v$ (note that, it does not matter on which of these two vertices he starts the second round of the game, as one of these two vertices will be colored with color $2$, the other with color $3$, and all the remaining uncolored vertices of graphs $G_2-v$ and $G_3-w$ will also be able to receive one of these two colors). After his first move in the second round no vertex of $G_2-v$ can be colored with color $2$. Because Alice does not want to be the first to color a vertex of $G_1-u$, she colors one of the vertices of $G_3-w$ which can still receive color $2$. Bob follows her in this move by coloring the vertex of the same partite set in his next move. Since they have even number of moves on this partite set and Alice started coloring it, she will be the first to color one of the uncolored vertices of $G_1-u$. Bob responds by coloring the opposite vertex along the missing edge of $\mathcal{M}$. After this move, the second round is over, since no other vertex can be colored with color $2$. Alice starts the third round of the game. She wants Bob to be the first to color the uncolored vertices in $G_1-u$, so she colors $w$ first. All uncolored vertices of $G_2-v$ must be colored in the third round of the game. Since Bob is the first who starts coloring them and there are an even number of these vertices, he will have the first move with color $3$ in the graph $G_3-w$. Alice responds by coloring a vertex in the same partite set. After this move, all uncolored vertices in this partite set can receive color $3$ and all the remaining uncolored vertices color $4$. We conclude that $\cigAB(G)=4$.

Next, we prove that $\cigAB(G-x)=3$ if $x\in V(G) \backslash\{u,v,w\}$. Assume that Alice and Bob play an AB-independence coloring game on $G-x$. The first move belongs to Alice. If $x \in G_1$, Alice's optimal first move is to color the vertex $u$, if $x\in G_2$ the vertex $v$ and if $x\in G_3$ the vertex $w$. Suppose that $x \in G_1$ and that Alice colors the vertex $u$ in her first move (the strategy for Alice to win the game on $G-x$ with three colors in the other two described cases is analogous). Then Bob has to make his first move on one of the partite set of the graphs $G_2-v$ or $G_3-w$, say $G_2-v$. Alice responds by coloring a vertex in the same partite set. Then all vertices in this partite set have to receive color $1$ and since their number is even, Bob will be the first to color the vertices in one of the partite sets of $G_3-w$ (note that vertices $v$ and $w$ can no longer receive color $1$, since their neighbor $u$ is colored with $1$). Again, Alice colors in her next move a vertex in the same partite set. Now, all vertices in this partite set must be colored in the first round of the game, and because their number is even, Alice makes the last move in this round. This means that the first move of the second round of the game belongs to Bob. If he colors some uncolored vertex in $G_1-u$, Alice responds by coloring an uncolored vertex in the same partite set in her next move. With such a move she ensures that all vertices in this partite set will receive color $2$. Otherwise, if Bob makes his first move with color $2$ anywhere else, Alice colors in her next move the vertex lying opposite of $x$ along the missing edge of $\mathcal{M}$. In this way, she ensures that all the vertices in the same partite set as the last colored vertex will also receive color $2$. If after Bob's response both $v$ and $w$ are still uncolored, Alice colors one of them (for at least one of them the color $2$ is allowed). After this move from Alice, it is clear which vertices will receive color $2$ in this round of the game, and all the remaining uncolored vertices will receive color $3$ in the next round. It follows that $\cigAB(G-x)=3$, and hence $\cigAB(G)-\cigAB(G-x)=1$.

It remains to prove that $\cigAB(G-x)=2n+1$ if $x=v$ or $x=w$ and that $\cigAB(G-x)=2n+3$ if $x=u$. If $x=v$ (or $x=w$), then Alice starts the first round of the game on $u$. With such a move she ensures that $2n+1$ colors are enough to finish the game on $G-x$. If Alice had started the game by coloring either $w$ or $v$, $2n+3$ colors would have be needed to complete the game. Otherwise, if $x=u$, then Alice stars the game by coloring either $v$ or $w$. With such a move she ensures that $2n+3$ colors are enough to finish the game on $G-x$. Since the continuation of the game is regardless of whether $x=u$, $x=v$ or $x=w$, we consider just the case when $x=v$. In this case, Alice colors the vertex $u$ in her first move of the game. Then, Bob has to color some vertex in one of the partite sets of the graphs $G_2-v$ or $G_3-w$, say $G_2-v$. Alice responds by coloring a vertex in the same partite set. After Alice's move, all vertices in this partite set have to receive color $1$. Since the number of vertices in this partite set is even, Bob is forced to take the first move in a partite set of $G_3-w$. Again, Alice responds by coloring the vertex in the same partite set. In the continuation of the first round of the game all vertices in this partite set receive color $1$. Since Alice made the final move in the first round, Bob begins the second round of the game. His optimal first move is to color vertex $w$. Then all uncolored vertices of the graph $G_2-v$ will have to be colored in the second round of the game. Since Alice has to be the first to color a vertex in $G_2-v$, and their number is even, Alice will be the first to color some uncolored vertex of the graph $G_1-u$ in this round. Bob responds by coloring the vertex lying opposite of Alice's last colored vertex along the missing edge of $\mathcal{M}$. After this move no other uncolored vertex can receive color $2$ and the second round of the game is over. In the third round of the game Alice has the first move. All uncolored vertices in $G_3-w$ can be colored in this round. Since their number is even, and Alice and Bob alternate turns, Bob colors the last of the uncolored vertices in $G_3-w$. Therefore, Alice is forced to start coloring the uncolored part of $G_1-u$. No matter where she plays, Bob respond by coloring the vertex lying opposite of Alice's last colored vertex along the missing edge of $\mathcal{M}$. After Bob's move, the third round of the game is over and $4n-4$ vertices of the graph $G_1-u$ remain uncolored. Since Alice has the first move in the new round of the game, only two vertices are colored in each of the subsequent rounds. Namely, Alice colors in each round of the game some uncolored vertex and Bob the vertex lying opposite of Alice's colored vertex along the missing edge of $\mathcal{M}$. Thus, another $2n-2$ rounds are required to complete the game. All together, the game lasts $2n-2+3=2n+1$ rounds. It follows that $\cigAB(G-v)=2n+1$, and $\cigAB(G)-\cigAB(G-v)=-2n+3$. In the case where $x=u$, we get $\cigAB(G-u)=2n+3$, and $\cigAB(G)-\cigAB(G-u)=-2n+1$.
\qed
\end{proof}

We conclude this section with a characterization of $2$-$\cigA$-game-vertex-critical ($2$-$\cigAB$-game-vertex-critical) and connected $3$-$\cigA$-lower-game-vertex-critical ($3$-$\cigAB$-lower-game-vertex-critical) graphs. In order to do this, we will need the characterization of connected graphs $G$ with $\cigA(G)=2$ ($\cigAB(G)=2$)~\cite{db-2021}, which will be heavily used in the proofs. Since this characterization is far from trivial, we state it as separate theorem.

\begin{theorem}[\!\!\cite{db-2021}]
\label{izrek_dva}
If $G$ is a connected graph with at least one edge, then the following statements are equivalent:
\begin{enumerate}[(1)]
\item $\cigA(G) =2$;
\item $\cigAB(G)=2$;
\item $G$ is a bipartite graph with the bipartition $V(G)=(X_1,X_2)$,
and there exists an $i\in\{1,2\}$ and a vertex $x$ in $X_i$, which is adjacent to all vertices from $X_j$, where $\{i,j\}=\{1,2\}$.
\end{enumerate}
\end{theorem}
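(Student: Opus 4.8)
The plan is to reduce both game variants to a single combinatorial question about the \emph{first} round, exploiting the fact that in both the A- and the AB-version Alice opens the first round, so the analysis of that round is identical for $\cigA$ and $\cigAB$. First I would record the elementary bound $\cigA(G),\cigAB(G)\ge \chi(G)$: since every completed game produces a proper coloring whose number of colors equals the number of rounds, the game cannot finish in fewer rounds than $\chi(G)$. As $G$ has an edge, $\chi(G)\ge 2$, so each parameter is at least $2$, and if either one equals $2$ then $\chi(G)=2$, i.e.\ $G$ is bipartite. This disposes of the ``bipartite'' part of statement (3) and lets me assume henceforth that $G$ is connected bipartite with the (unique) bipartition $V(G)=(X_1,X_2)$.

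Next I would isolate the crucial observation that, for a connected bipartite graph, the game finishes in exactly two rounds if and only if the maximal independent set $S_1$ colored with color $1$ in the first round equals one whole part, $S_1=X_1$ or $S_1=X_2$. Indeed, the game ends after round $2$ precisely when the still uncolored set $V(G)\setminus S_1$ is independent; because the bipartition of a connected bipartite graph is unique and $G$ (being connected with an edge) has no isolated vertices, $V(G)\setminus S_1$ is independent exactly when $S_1\supseteq X_1$ or $S_1\supseteq X_2$, and maximality together with independence of $S_1$ then forces $S_1=X_1$ or $S_1=X_2$. Since all rounds after the first are entirely forced once $S_1$ is fixed, this reduces the theorem to a single question that does not distinguish the two variants, as Alice starts round $1$ in both: can Alice force the first round to end at a full part, and can Bob prevent this?

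For the implications (3)$\Rightarrow$(1) and (3)$\Rightarrow$(2) I would have Alice color the special vertex $x\in X_i$ (adjacent to all of $X_j$) on her very first move. Then every vertex of $X_j$ is permanently blocked for color $1$, so only vertices of $X_i$ are ever colored in round $1$; since $X_i$ is independent, no vertex of $X_i$ can ever be blocked, so the round cannot stop until all of $X_i$ is colored, giving $S_1=X_i$ regardless of Bob's play. By the previous paragraph the game ends in round $2$, so $\cigA(G)=\cigAB(G)=2$. For the converse I would argue the contrapositive: assuming no vertex of either part is adjacent to the whole opposite part, I give Bob a spoiling strategy. Whatever vertex $a$ Alice colors first, say $a\in X_i$, by assumption $a$ is not adjacent to all of $X_j$, so Bob may legally color a non-neighbor $b\in X_j$ with color $1$. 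Now $S_1$ contains a vertex of each part, so $S_1\neq X_1$ and $S_1\neq X_2$, whence the game cannot end in two rounds and $\cigA(G),\cigAB(G)\ge 3$. Combining the two directions yields (1)$\Leftrightarrow$(3) and (2)$\Leftrightarrow$(3), and hence (1)$\Leftrightarrow$(2).

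The main obstacle is the reduction in the second paragraph: one must argue cleanly that ``finishes in two rounds'' is equivalent to ``$S_1$ is a full part,'' which rests on the uniqueness of the bipartition of a connected bipartite graph and on the fact that round $1$ necessarily terminates at a \emph{maximal} independent set. The remaining delicacy is checking that the A/AB distinction is irrelevant --- precisely because whether the game lasts two rounds or more is decided entirely inside the first round, which Alice opens in both variants --- and this is exactly what simultaneously delivers the equivalence of (1) and (2).
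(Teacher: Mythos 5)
Your proposal cannot be compared against anything inside the paper: Theorem~\ref{izrek_dva} is stated there as an imported result, cited from~\cite{db-2021}, and the authors explicitly decline to prove it (``Since this characterization is far from trivial, we state it as separate theorem''). Judged on its own merits, your argument is correct and self-contained. The two pillars both hold: (i) the round-one set $S_1$ is independent by the rules of the game, and the game ends after round two precisely when $V(G)\setminus S_1$ is independent; in that case $S_1$ and its complement are two nonempty independent sets partitioning $V(G)$, so the uniqueness of the bipartition of a connected bipartite graph forces $\{S_1,V(G)\setminus S_1\}=\{X_1,X_2\}$, making ``two rounds'' equivalent to ``$S_1$ is a full part''; (ii) whether Alice can force this is decided entirely within round one, which is played identically in the A- and AB-variants (Alice opens, the players alternate), so both parameters are treated simultaneously and the equivalence of (1) and (2) comes for free. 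The strategies are also right: under (3), opening on the vertex $x$ adjacent to all of $X_j$ blocks every vertex of $X_j$ for color $1$, and independence of $X_i$ guarantees the round cannot stop before $S_1=X_i$; conversely, with bipartiteness already forced by $\chi(G)\leq \cigA(G),\cigAB(G)$, Bob's reply on a non-neighbor of Alice's first vertex in the opposite part puts vertices of both parts into $S_1$, ruling out two rounds. One cosmetic remark: the appeal to maximality of $S_1$ in your reduction is not actually needed --- independence of $S_1$ together with independence of its complement already yields the bipartition --- but this redundancy does not affect correctness.
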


Theorem~\ref{izrek_dva} introduces a vertex with a special property in bipartite graphs. Hence, if $G$ is a bipartite graph with the bipartition $V(G)=(X_1,X_2)$, and there exists an $i\in\{1,2\}$ and a vertex $x$ in $X_i$, which is adjacent to all vertices from $X_j$, $\{i,j\}=\{1,2\}$, then we will call $x$ a \emph{dominating vertex} in $G$.

\begin{proposition}
Let G be a connected graph. Graph $G$ is a $2$-$\cigA$-game-vertex-critical ($2$-$\cigAB$-game-vertex-critical) graph if and only if $G = K_2$.
\end{proposition}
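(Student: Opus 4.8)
The plan is to prove the forward direction by direct verification and the reverse direction by contraposition, exhibiting for every connected $G\neq K_2$ with value $2$ a single vertex whose deletion leaves the value unchanged, thereby breaking criticality. First I would check that $K_2$ is $2$-$\cigA$- and $2$-$\cigAB$-game-vertex-critical: $\cigA(K_2)=\cigAB(K_2)=2$, while deleting either vertex yields $K_1$ with $\cigA(K_1)=\cigAB(K_1)=1$, so the value strictly drops for every vertex. This settles the ``if'' part for both games at once.

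For the converse, suppose $G$ is connected with $\cigA(G)=2$ (equivalently $\cigAB(G)=2$, by Theorem~\ref{izrek_dva}) and $G\neq K_2$, so that $|V(G)|\geq 3$. Theorem~\ref{izrek_dva} supplies a bipartition $(X_1,X_2)$ of $G$ together with a dominating vertex; without loss of generality let $d\in X_1$ be adjacent to all of $X_2$. The key step is to delete a non-cut vertex $v\neq d$. Such a vertex exists because every connected graph on at least two vertices has at least two non-cut vertices -- for instance, the leaves of any spanning tree $T$ are non-cut vertices, since $T-v$ is a spanning tree of $G-v$ -- and at most one of these can equal $d$. Consequently $G-v$ is connected, and it remains only to verify that $d$ is still a dominating vertex of $G-v$, for then Theorem~\ref{izrek_dva} yields $\cigA(G-v)=\cigAB(G-v)=2=\cigA(G)$, contradicting game-vertex-criticality.

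To check that $d$ still dominates, I would distinguish two cases for the location of $v$. If $v\in X_1$, then neither $d$ nor its neighborhood in $X_2$ is affected, so $d$ dominates $X_2$ in $G-v$. If $v\in X_2$, then necessarily $|X_2|\geq 2$: otherwise $X_2=\{v\}$ would force $G$ to be a star centered at $v$, making $v$ a cut vertex (as $|V(G)|\geq 3$), contrary to the choice of $v$; hence $X_2\setminus\{v\}$ is nonempty and $d$ dominates it in $G-v$. This shows that $K_2$ is the only connected $2$-$\cigA$-game-vertex-critical (and $2$-$\cigAB$-game-vertex-critical) graph. The main obstacle is the simultaneous preservation of connectivity and of a dominating vertex under deletion, which is exactly what the non-cut choice together with the short case analysis for $v\in X_2$ resolves; note that no separate argument is needed for the $\cigAB$ version, since Theorem~\ref{izrek_dva} provides the same value-$2$ characterization for both games.
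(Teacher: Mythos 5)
Your proof is correct. It shares the paper's backbone---both directions run through Theorem~\ref{izrek_dva}, and the converse is settled by producing a single vertex whose deletion leaves a bipartite graph that still has a dominating vertex, hence still has value $2$---but your way of choosing that vertex is genuinely different and, in one respect, more careful than the paper's. The paper splits on the size of the dominated side $X_j$: if $|X_j|=1$ it identifies $G$ as a star and either concludes $G=K_2$ or deletes a leaf; if $|X_j|\geq 2$ it deletes an \emph{arbitrary} vertex $y\in X_j$ and invokes Theorem~\ref{izrek_dva} on $G-y$. That last step is slightly loose: Theorem~\ref{izrek_dva} is stated for connected graphs, and deleting an arbitrary vertex of the dominated side can disconnect the graph (take $P_4$ with edges $uy$, $yx'$, $x'z$, whose dominating vertex is $x'$; deleting $y$ isolates $u$), so the paper's appeal to the theorem at that point really needs an extra remark that the resulting isolated vertices do not push the value above $2$. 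Your choice of a non-cut vertex $v\neq d$ (which exists by the spanning-tree-leaf argument) removes this issue by construction: $G-v$ is connected with at least one edge, your two-case check ($v\in X_1$ versus $v\in X_2$, with the star argument ruling out $X_2=\{v\}$) shows that $d$ still dominates, and Theorem~\ref{izrek_dva} then applies strictly within its hypotheses. What the paper's route buys is explicit structural information along the way (the star case is isolated and handled by hand); what yours buys is that every invocation of the characterization theorem is legitimate as stated.
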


\begin{proof}
Clearly, $\cigA(K_2)=\cigAB(K_2)=2$ and $\cigA(K_2-x)=\cigAB(K_2-x)=1$ for every vertex $x\in V(K_2)$.

Now assume that $G$ is a $2$-$\cigA$-vertex-critical ($2$-$\cigAB$-game-vertex-critical), connected graph. Since $\cigA(G)=2$ ($\cigAB(G)=2$), then $G$ is by Theorem~\ref{izrek_dva} a bipartite graph with the bipartition $V(G)=(X_1,X_2)$ for which there exists an $i \in \{1,2\}$ and a dominating vertex $x' \in X_i$. If $|X_j|=1$, $j \neq i$, and $y$ the unique vertex in $X_j$, then $G$ is a star graph with the central vertex $y$. If there exists a vertex $x \in V(X_i)$, $x \neq x'$, then $G-x$ is a also a star graph, and hence $\cigA(G-x)=\cigAB(G-x)=2$, which means that $G$ is not a $2$-$\cigA$-game-vertex-critical ($2$-$\cigAB$-game-vertex-critical) graph. The other case is that $x'$ is the only vertex in $X_i$, which gives $G=K_2$. However, if $|X_j| \geq 2$, then removing any vertex $y \in X_j$ yields a bipartite graph with the bipartition $V(G)=(X_1,X_2 \backslash \{y\})$ such that the vertex $x' \in X_i$ is adjacent to all vertices from $X_2 \backslash \{y\}$. Again by Theorem~\ref{izrek_dva}, $\cigA(G-y)=\cigAB(G-y)=2$ and $G$ is not a $2$-$\cigA$-game-vertex-critical ($2$-$\cigAB$-game-vertex-critical) graph.
\qed
\end{proof}

Before we characterize connected $3$-$\cigB$-lower-game-vertex-critical ($3$-$\cigAB$-lower-game-vertex-critical) graphs we need the following lemma. Even though it has a trivial proof, its use considerably simplifies both independence coloring games in the proof of Theorem~\ref{3-indep-lower}.

\begin{lemma}
\label{distance}
Assume that Alice and Bob play an A- or AB-independence coloring game on a connected graph $G$, and Alice colors in her first move a vertex $u \in V(G)$. If there exists a vertex $v \in V(G)$, such that $d(u,v) \geq 3$, then at least three different colors are needed to finish the game on $G$.
\end{lemma}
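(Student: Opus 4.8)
The plan is to exhibit an explicit reply for Bob that makes two colors impossible, exploiting the fact that $d(u,v)\ge 3$ produces a short path anchored at $u$ whose far endpoint can legally be given color $1$. First I would fix a shortest path $u=x_0,x_1,x_2,\ldots ,x_d=v$ with $d=d(u,v)\ge 3$. Since every subpath of a shortest path is again shortest, the vertex $x_3$ satisfies $d(u,x_3)=3$; in particular $x_3$ is not adjacent to $u$. Passing from the hypothesis $d(u,v)\ge 3$ to a vertex at distance \emph{exactly} three is the key preliminary step, because it is what makes the parity argument below valid uniformly for every $d\ge 3$.

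Next I would describe Bob's move. In both the A- and the AB-independence coloring game the first round is started by Alice and the players alternate inside the round, so immediately after Alice colors $u$ with color $1$ the second move of the round belongs to Bob. Bob colors $x_3$ with color $1$. This is a legal move, since the vertices receiving color $1$ in a round must form an independent set and $\{u,x_3\}$ is independent by $d(u,x_3)=3>1$. Thus both $u$ and $x_3$ carry color $1$.

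Then I would argue by contradiction that the game cannot terminate after only two rounds. If it did, every vertex would receive color $1$ or color $2$, and since the vertices colored in each round form an independent set, the resulting coloring would be a proper $2$-coloring of $G$. Restricting such a coloring to the path $u\,x_1\,x_2\,x_3$, consecutive vertices get distinct colors and only two colors are available, so the colors alternate along the path; hence $u$ and $x_3$, being at distance $3$, would receive different colors. This contradicts the fact that Bob gave $x_3$ the same color $1$ as $u$. Consequently two colors never suffice, and since Bob can enforce this configuration regardless of how the rest of the game is played, at least three colors are needed to finish the game.

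I expect the only delicate points to be bookkeeping rather than substance: verifying that it is genuinely Bob's turn immediately after Alice's opening move in each of the two game variants, and ensuring the reduction to a vertex at distance exactly $3$ so that the ``odd distance forces distinct colors'' step is justified. Once $x_3$ is selected the contradiction is immediate, which is consistent with the remark that the lemma has a short proof.
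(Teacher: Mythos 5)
Your proof is correct and follows essentially the same strategy as the paper's: Bob answers Alice's opening move by coloring a vertex at distance exactly $3$ from $u$ with the same color, which forces a third color. The only cosmetic difference is in the final step, where you derive the contradiction from the parity of a proper $2$-coloring along the path, while the paper argues directly that the two intermediate vertices of the path must each receive a new private color; both are immediate consequences of the same configuration.
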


\begin{proof}
If Alice colors vertex $u \in V(G)$ with a color, then Bob chooses a vertex $w \in V(G)$ on a shortest path between $u$ and $v$ such that $d(u,w)=3$ (this is possible because $d(u,v) \geq 3$), and colors it with the same color. Both vertices that lie on a shortest path between $u$ and $w$ will have to receive each their own private color in the forthcoming rounds of the game, since they are adjacent, and one of them is also adjacent to $u$ and the other to $w$. Hence, at least three different colors are needed to finish the game on $G$.
\qed
\end{proof}

\begin{theorem}
\label{3-indep-lower}
Let G be a connected graph. Graph $G$ is a $3$-$\cigA$-lower-game-vertex-critical ($3$-$\cigAB$-lower-game-vertex-critical) graph if and only if one of the following holds:
\begin{enumerate}
\item $G=C_3$,
\item $G=C_5$,
\item $G=P_6$,
\item $G=K_{3,3}-\mathcal{M}$, where $\mathcal{M}$ is a perfect matching in $K_{3,3}$,
\item $G=C_4^+$, where $C_4^+$ is the graph obtained from $C_4$ by adding one pendant neighbor to each vertex of $C_4$.
\end{enumerate}
\end{theorem}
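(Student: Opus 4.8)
The plan is to prove both directions while treating $\cigA$ and $\cigAB$ in parallel, writing $\varphi$ for either invariant. I will lean on three facts. Since each play of either game ends in a proper coloring, $\varphi(H)\ge\chi(H)$ for every $H$. By Theorem~\ref{izrek_dva}, for a connected graph $H$ with an edge, $\cigA(H)=2$ exactly when $\cigAB(H)=2$, which happens exactly when $H$ is bipartite with a dominating vertex; and in a connected bipartite graph a vertex is dominating precisely when its eccentricity is at most $2$, so this is equivalent to $\mathrm{rad}(H)\le 2$. Thus $\varphi(H)\ge 3$ for a connected bipartite $H$ iff $\mathrm{rad}(H)\ge 3$. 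For the forward direction I would verify the five graphs directly: each has $\varphi(G)\ge 3$ (it is either an odd cycle, so $\chi(G)=3$, or a connected bipartite graph of radius $3$ with no dominating vertex), while $\varphi(G)\le 3$ and $\varphi(G-x)\le 2$ follow from explicit two-round strategies for Alice exactly as in Propositions~\ref{independence-game-lower} and~\ref{independenceA,AB-disconnected}, certifying the connected vertex-deleted subgraphs through Theorem~\ref{izrek_dva} and the disconnected ones through the parity bookkeeping of Proposition~\ref{independenceA,AB-disconnected}.

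For the converse assume $G$ is connected with $\varphi(G)=3$ and $\varphi(G-x)\le 2$ for all $x$. Suppose first $G$ is not bipartite and let $C$ be a shortest odd cycle; then $C$ is chordless. No vertex $x$ can lie outside $C$, since $G-x\supseteq C$ would give $\varphi(G-x)\ge\chi(G-x)\ge 3$; hence $V(G)=V(C)$, and $C$ being chordless forces $G=C$. Then $G-x=P_{n-1}$, which has a dominating vertex (so $\varphi=2$) precisely when $n-1\le 5$; as $n$ is odd this leaves $G\in\{C_3,C_5\}$.

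Now let $G$ be bipartite, so $\mathrm{rad}(G)\ge 3$. I would show $\mathrm{rad}(G)=3$ via the estimate that one vertex deletion lowers the radius by at most one: if $G-x$ is connected and some $c$ had $\mathrm{ecc}_{G-x}(c)\le\mathrm{rad}(G)-2$, then every $w\neq x$ already lies within $\mathrm{rad}(G)-2$ of $c$ in $G$, and $x$ is reached through a neighbor within $\mathrm{rad}(G)-1$, giving $\mathrm{ecc}_G(c)<\mathrm{rad}(G)$, a contradiction. Hence if $\mathrm{rad}(G)\ge 4$ then $\varphi(G-x)\ge 3$ for every $x$ with $G-x$ connected; a non-cut vertex always exists, so criticality fails. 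Therefore $\mathrm{rad}(G)=3$, and fixing a center $c$ gives a layering $L_0=\{c\},L_1,L_2,L_3$ with edges only between consecutive layers.

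From radius $3$ the enumeration is finite. For a non-cut vertex $x$ the graph $G-x$ is connected of radius exactly $2$, hence by Theorem~\ref{izrek_dva} has a dominating vertex $c_x$; since $c_x$ cannot dominate $G$ itself, $c_x$ is adjacent to every vertex of its opposite side except exactly $x$. Imposed by all non-cut vertices, together with bipartiteness, radius $3$, and the fact that $\varphi(G)=3$ excludes the larger graphs $K_{m,m}-\mathcal{M}$ (whose value is $m$ by Proposition~\ref{independence-game-lower}), this pins the adjacencies down and yields $C_6=K_{3,3}-\mathcal{M}$ in the $2$-connected case. The main obstacle is the case of cut vertices: then some $G-x$ is disconnected, Theorem~\ref{izrek_dva} no longer applies componentwise, and one must argue directly---as in Proposition~\ref{independenceA,AB-disconnected}---that Alice finishes each such $G-x$ in two rounds while Bob prevents two rounds in $G$, the delicate point being the parity of the forced color-$1$ moves across components; this step isolates $P_6$ and $C_4^+$. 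Since every argument is phrased through the radius criterion, Lemma~\ref{distance}, and parities insensitive to who starts later rounds, it applies verbatim to both $\cigA$ and $\cigAB$, giving the same characterization.
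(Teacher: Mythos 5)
Your reformulation of Theorem~\ref{izrek_dva} in terms of radius (in a connected bipartite graph a vertex is dominating precisely when its eccentricity is at most $2$) is correct and genuinely useful: with it your non-bipartite case is complete and matches the paper's conclusion, and your reduction of the bipartite case to $\mathrm{rad}(G)=3$ (via the observation that deleting a non-cut vertex lowers the radius by at most one) is sound. The paper organizes the converse differently, by the number of bipartite components and isolated vertices of $G-x$, but up to this point your route would work.

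The gap is in everything after that, and it is not just a matter of omitted routine detail. Your central structural claim -- that bipartiteness, radius $3$, $\varphi(G)=3$, and the requirement that every non-cut vertex deletion leaves a graph with a dominating vertex ``pin the adjacencies down and yield $C_6$'' in the $2$-connected case -- is false as stated. The $3$-cube $Q_3$ satisfies every one of your structural constraints: it is bipartite, $2$-connected, of radius $3$, has no dominating vertex, and for each vertex $x$ the antipode of $x$ is adjacent to exactly the other three vertices of its side, hence dominates $Q_3-x$, so $\cigA(Q_3-x)=\cigAB(Q_3-x)=2$ by Theorem~\ref{izrek_dva}. Yet $Q_3\neq K_{3,3}-\mathcal{M}$, and it is not of the form $K_{m,m}-\mathcal{M}$, so your only game-value exclusion tool (Proposition~\ref{independence-game-lower}) does not touch it. The reason $Q_3$ does not appear in the theorem is that $\cigA(Q_3)=\cigAB(Q_3)=4$: Bob answers Alice's first move by coloring the antipodal vertex, the first round ends with just these two vertices, and the remaining graph is $C_6$, on which Bob's distance-$3$ strategy forces three further rounds. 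So $Q_3$ is in fact a $4$-lower-critical graph, and excluding it (and any similar graph) from your enumeration requires exactly the kind of game-theoretic analysis your proposal never performs; structural constraints plus the $K_{m,m}-\mathcal{M}$ values cannot do it. The same criticism applies, more broadly, to the cut-vertex case: the sentence ``this step isolates $P_6$ and $C_4^+$'' states the desired conclusion rather than proving it, whereas in the paper this case is the bulk of the argument -- a long case analysis (the sets $S$ and $T$, the paths $L_i$, the pendant-neighbor arguments) in which Lemma~\ref{distance} is applied to a carefully chosen deleted vertex at every turn. As it stands, the converse direction of your proof establishes only that $G$ is an odd cycle $C_3$ or $C_5$, or a bipartite graph of radius $3$; the classification of the bipartite graphs, which is the heart of the theorem, is missing.
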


\begin{proof}
Obviously, $C_3$, $C_5$ and $P_6$ are $3$-$\cigA$-lower-game-vertex-critical and $3$-$\cigAB$-lower-game-vertex-critical graphs. 

Next, we show that $K_{3,3}-\mathcal{M}$ is $3$-$\cigA$-lower-game-vertex-critical and $3$-$\cigAB$-lower-game-vertex-critical graph. It is known that $\cigA(K_{3,3}-\mathcal{M})=\cigAB(K_{3,3}-\mathcal{M})=3$~\cite{db-2021}. When we delete an arbitrary vertex $x$ of $K_{3,3}-\mathcal{M}$ from one of the partite sets, there exists a vertex lying opposite of $x$ along the missing edge of $\mathcal{M}$, which is adjacent to both vertices of the partite set from which we removed $x$. This vertex is a dominating vertex and hence by Theorem~\ref{izrek_dva}, $\cigA((K_{3,3}-\mathcal{M})-x)=\cigAB((K_{3,3}-\mathcal{M})- x))=2$.

Clearly, $\cigA(C_4^+)=\cigAB(C_4^+)=3$ since wherever Alice starts the first round of the game, there always exists a vertex at the distance $3$ of Alice's chosen vertex, hence by Lemma~\ref{distance}, at least three different colors are needed to finish the game on $C_4^+$. When we remove an arbitrary vertex $x \in C_4^+$, there exists a vertex $y$ in graph $C_4^+-x$ which has distance at most $2$ to every other vertex of the connected component of $C_4^+-x$ to which $y$ belongs. Note that the other connected component of $C_4^+-x$, if it exists, consists of a single vertex. If Alice colors $y$ in the first round of the game played on $C_4^+-x$, the game ends in two rounds. Therefore, $\cigA(C_4^+-x)=\cigAB(C_4^+-x)=2$ for an arbitrary $x\in V(C_4^+)$. 

To prove the other direction assume that $G$ is a $3$-$\cigA$-lower-game-vertex-critical ($3$-$\cigAB$-lower-game-vertex-critical) graph. Note that if $G$ contains an odd cycle as a proper subgraph, then we can remove a vertex from $G$ that does not belong to this cycle, and the remaining graph will still contain an odd cycle as a subgraph. Hence, at least three colors will be needed to color it.

We remove an arbitrary vertex $x \in V(G)$. We may assume that $\cigA(G)=\cigAB(G)=3$ and $\cigA(G-x)=\cigAB(G-x)=2$. Namely, if $\cigA(G-x)=\cigAB(G-x)=1$, then $G-x$ is formed from isolated vertices, and since $G$ is connected it must be a star graph. Thus $\cigA(G)=\cigAB(G)=2$, which is a contradiction. Since $G-x$ requires two colors to be properly colored, it must be a disjoint union of connected bipartite graphs $B_1, \ldots, B_k$, $k \geq 1$, and isolated vertices $z_1, \ldots ,z_\ell$, $\ell \geq 0$. The vertex $x$ must be adjacent to all bipartite graphs and all isolated vertices in $G$, since $G$ is by assumption a connected graph. For every $i \in \{1, \ldots , k\}$ let us denote both partite sets of $B_i$ with $X_i$ and $Y_i$. It is clear that $|X_i| \geq 1$ and $|Y_i| \geq 1$ for every $i \in \{1, \ldots , k\}$, since for otherwise the vertices of those partite sets would belong to isolated vertices. We split the proof into three cases.

\medskip

\textbf{Case 1:} $k \geq 3$.\\
If there exists an $i \in \{1, \ldots , k\}$ such that $x$ is adjacent to vertices of both partite sets $X_i$ and $Y_i$ in $G$, then $G$ must contain an odd cycle as a proper subgraph in which case it can not be a $3$-$\cigA$-lower-game-vertex-critical ($3$-$\cigAB$-lower-game-vertex-critical) graph. Thus, $x$ can be adjacent to the vertices of only one partite set of every bipartite subgraph of $G$. Without loss of generality we may assume that $x$ is adjacent only to the vertices of $X_i$ for every $i \in \{1, \ldots , k\}$. If $x$ is adjacent to all vertices of $X_i$ for every $i \in \{1, \ldots , k\}$, then it is a dominating vertex in $G$ and by Theorem~\ref{izrek_dva} we have $\cigA(G)=\cigAB(G)=2$, which is a contradiction. Hence, there exists an $i \in \{1, \ldots , k\}$, and a vertex $x' \in X_i$, such that $xx' \notin E(G)$. Now we remove a vertex $v \in V(B_j)$, $j \neq i$, such that the remaining graph $G-v$ stays connected. Since $G-v$ is still a bipartite graph with no dominating vertex, it has $\cigA(G-v) \neq 2$ and $\cigAB(G-v) \neq 2$ by Theorem~\ref{izrek_dva}. Since clearly also $\cigA(G-v) \neq 1$ and $\cigAB(G-v) \neq 1$, we have $\cigA(G-v) \geq 3$ and $\cigAB(G-v) \geq 3$, which is a contradiction.

\medskip

\textbf{Case 2:} $k=2$.\\
If there exists an $i \in \{1,2\}$ such that $x$ is adjacent to vertices of both partite sets $X_i$ and $Y_i$ in $G$, then $G$ must contain an odd cycle as a proper subgraph in which case it can not be a $3$-$\cigA$-lower-game-vertex-critical ($3$-$\cigAB$-lower-game-vertex-critical) graph. Thus, $x$ can be adjacent to the vertices of only one partite set, say $X_1$ and $X_2$. If $x$ is adjacent to all vertices of $X_1$ and $X_2$, then it is a dominating vertex and by Theorem~\ref{izrek_dva} we have $\cigA(G)=\cigAB(G)=2$, which is a contradiction. Therefore, there exists an $i \in \{1,2\}$, and a vertex $x' \in X_i$, such that $xx' \notin E(G)$.

\medskip

\textbf{Subcase 2.1:} $\ell \geq 1$.\\
We remove the vertex $z_1$ to obtain the graph $G-z_1$. Since $G-z_1$ is still a bipartite graph that does not have a dominating vertex, if follows from Theorem~\ref{izrek_dva} that $\cigA(G-z_1) \neq 2$ and $\cigAB(G-z_1) \neq 2$. Since clearly also $\cigA(G-z_1) \neq 1$ and $\cigAB(G-z_1) \neq 1$, we have $\cigA(G-z_1) \geq 3$ and $\cigAB(G-z_1) \geq 3$, which is again a contradiction.

\medskip

\textbf{Subcase 2.2:} $\ell = 0$.\\
If $|Y_j| \geq 2$, $j \neq i$, than removing any vertex $v \in V(B_j)$, such that the remaining graph $G-v$ stays connected, yields a bipartite graph with no dominating vertex, which by Theorem~\ref{izrek_dva} gives $\cigA(G-v) \neq 2$ and $\cigAB(G-v) \neq 2$. Since also $\cigA(G-v) \neq 1$ and $\cigAB(G-v) \neq 1$, we have $\cigA(G-v) \geq 3$ and $\cigAB(G-v) \geq 3$, which is a contradiction. Hence, $|Y_j|=1$. Let us denoted with $y_j$ the only vertex in $Y_j$. If also $|X_j| \geq 2$, then removing any vertex $u \in X_j$, such that the remaining graph $G-u$ stays connected, again yields a bipartite graph with no dominating vertex, which by Theorem~\ref{izrek_dva} gives $\cigA(G-u) \neq 2$ and $\cigAB(G-u) \neq 2$. Since also $\cigA(G-u) \neq 1$ and $\cigAB(G-u) \neq 1$, we have $\cigA(G-u) \geq 3$ and $\cigAB(G-u) \geq 3$, which is a contradiction. Thus, $|X_j|=1$. Let us denoted with $x_j$ the only vertex in $X_j$. Clearly, $xx_j \in E(G)$ and $x_jy_j \in E(G)$, since $G$ is connected. Similarly, if $|Y_i| \geq 2$, then we remove a vertex $v \in V(B_i) \backslash \{x'\}$, such that the remaining graph $G-v$ stays connected. Note that this is possible because of $|Y_i| \geq 2$. Namely, if $|Y_i|=1$, then $x'$ might be the only vertex that can be removed, and this is precisely what we do not want. The graph $G-v$ is bipartite and does have a dominating vertex, which by Theorem~\ref{izrek_dva} gives $\cigA(G-v) \neq 2$ and $\cigAB(G-v) \neq 2$. Since also $\cigA(G-v) \neq 1$ and $\cigAB(G-v) \neq 1$, we have $\cigA(G-v) \geq 3$ and $\cigAB(G-v) \geq 3$, which is a contradiction. Hence, $|Y_i|=1$. Let us denoted with $y_i$ the only vertex in $Y_i$. Also, if $|X_i| \geq 3$, then removing any vertex $u \in X_j \backslash \{x'\}$, such that the remaining graph $G-u$ stays connected, again yields a bipartite graph with no dominating vertex, which by Theorem~\ref{izrek_dva} gives $\cigA(G-u) \neq 2$ and $\cigAB(G-u) \neq 2$. Since also $\cigA(G-u) \neq 1$ and $\cigAB(G-u) \neq 1$, we have $\cigA(G-u) \geq 3$ and $\cigAB(G-u) \geq 3$, which is a contradiction. Thus, $|X_i|=2$. Let us denoted with $x_i$ the only vertex in $X_i \backslash \{x'\}$. Clearly, $xx_i \in E(G)$, $x_iy_i \in E(G)$ and $x'y_i \in E(G)$ since $G$ is connected. What we obtained is $G=P_6$, which is one of the $3$-$\cigA$-lower-game-vertex-critical ($3$-$\cigAB$-lower-game-vertex-critical) graphs.

\medskip

\textbf{Case 3:} $k=1$.\\
From this point on let $X=X_1$ and $Y=Y_1$ denote both partite sets of $B_1$. If $ \ell \geq 1$ and $x$ is adjacent to vertices of both partite sets $X$ and $Y$ in $G$, then $G$ must contain an odd cycle as a proper subgraph in which case it can not be a $3$-$\cigA$-lower-game-vertex-critical ($3$-$\cigAB$-lower-game-vertex-critical) graph. Thus, $x$ can be adjacent to the vertices of only one partite set, say $X$. If $x$ is adjacent to all vertices of $X$, then it is dominating vertex and by Theorem~\ref{izrek_dva} we have $\cigA(G)=\cigAB(G)=2$, which is a contradiction. Therefore, there exists a vertex $x' \in X$, such that $xx' \notin E(G)$.

\medskip

\textbf{Subcase 3.1:} $\ell \geq 2$.\\
We remove the vertex $z_1$ to obtain the graph $G-z_1$. Since $G-z_1$ is still a bipartite graph with no dominating vertex, Theorem~\ref{izrek_dva} gives $\cigA(G-z_1) \neq 2$ and $\cigAB(G-z_1) \neq 2$. Since clearly also $\cigA(G-z_1) \neq 1$ and $\cigAB(G-z_1) \neq 1$, we have $\cigA(G-z_1) \geq 3$ and $\cigAB(G-z_1) \geq 3$, which is again a contradiction.

\medskip

\textbf{Subcase 3.2:} $\ell = 1$.\\
Let us denote with $z=z_1$ the only isolated vertex in $G-x$, and let $x_1, \ldots ,x_a$, $a \geq 1$, be the vertices in $X \backslash \{x'\}$ that are adjacent to $x$ (note that $a=0$ is not possible since $G$ is connected). Since $G$ is bipartite and $\cigA(G)=\cigAB(G)=3$, $x_i$'s can not be dominating vertices. However, since $G-z$ is also a connected bipartite graph, and we want $\cigA(G-z)=\cigAB(G-z)=2$, $G-z$ must have a dominating vertex. By construction, vertices of $X$ can not be dominating vertices of $G-z$, hence there exists a vertex $y' \in Y$ such that $y'$ is adjacent to every vertex of $X$. Since $x_i$ is not a dominating vertex in $G$, there exists $y_i \in Y$ such that $x_iy_i \notin E(G)$ for every $i \in \{1, \ldots ,a\}$. Note that some of the vertices $y_i$ might represent the same vertex. Since vertices $x_i$ and $y_i$ are not adjacent in the bipartite graph $G$, we have $d_G(x_i,y_i) \geq 3$. Moreover, since $y' \in Y$ is a dominating vertex, it follows that $d_G(x_i,y_i)=3$ for every $i \in \{1, \ldots ,a\}$. For every $i \in \{1, \ldots ,a\}$ we find a shortest path $L_i$, $|V(L_i)|=3$, in $G$ between vertices $x_i$ and $y_i$ ($x_i$ and $y_i$ are the endvertices of $L_i$). Note that some of the paths $L_i$ might intersect each other. We define the set
$$S=V(G) \backslash \left(\bigcup_{i=1}^{a}V(L_i) \cup \{z,x,y',x'\}\right).$$
If $S \neq \emptyset$, then we remove a vertex $u \in S$ such that the graph $G-u$ remains connected. Suppose that Alice and Bob play an A- or AB-independence coloring game on the graph $G-u$. If Alice colors a vertex $v \in V(G-u) \backslash \{z,x,x_1, \ldots ,x_a\}$ in her first move, then $d_{G-u}(v,z) \geq 3$; if Alice colors $x_i$, $i \in \{1, \ldots ,a\}$, in her first move, then $d_{G-u}(x_i,y_i) \geq 3$; if Alice colors $x$ in her first move, then $d_{G-u}(x,x')=3$; if Alice colors $z$ in her first move, then $d_{G-u}(z,y')=3$. Considering all possibilities, and using Lemma~\ref{distance}, at least three different colors are needed to finish the game on $G-u$, which is a contradiction. We may therefore assume that $S = \emptyset$, which means that
$$V(G)=\left(\bigcup_{i=1}^{a}V(L_i) \cup \{z,x,y',x'\}\right).$$
We define the set $T=V(G) \backslash \{z,x,x_1, \ldots, x_a, y_1, \ldots, y_a,y',x'\}$, and split the proof with respect to the positive integer $a \geq 1$.

Suppose first that $a=1$. If $x' \notin V(L_1)$, then $G-x'$ is connected, and we let Alice and Bob play an A- or AB-independence coloring game on the graph $G-x'$. If Alice colors a vertex $v \in V(G-x') \backslash \{z,x,x_1\}$ in her first move, then $d_{G-x'}(v,z) \geq 3$; if Alice colors either $x_1$ or $x$ in her first move, then $d_{G-x'}(x_1,y_1)=3$ and $d_{G-x'}(x,y_1)=4$ (note that the shortest path $L_1$ between $x$ and $y_1$ goes through $x_1$); if Alice colors $z$ in her first move, then $d_{G-x'}(z,y')=3$. Considering all possibilities, and using Lemma~\ref{distance}, at least three different colors are needed to finish the game on $G-x'$, which is a contradiction. However, if $x' \in V(L_1)$, then by the structure of $G$, and $d_G(x_1,y_1)=3$, $x'$ must be adjacent to $y_1$. If $T \neq \emptyset$, then there exists exactly one vertex $u \in T$. By the structure of $G$, $G-u$ is the path $P_6$, and since $\cigA(P_6)=\cigAB(P_6)=3$, $G$ can not be a $3$-$\cigA$-lower-game-vertex-critical ($3$-$\cigAB$-lower-game-vertex-critical) graph. Therefore, $T = \emptyset$. In this case $V(G)=\{z,x,x_1,y_1,y',x'\}$, and obeying all the given adjacencies, we get $G=P_6$, which is one of the $3$-$\cigA$-lower-game-vertex-critical ($3$-$\cigAB$-lower-game-vertex-critical) graphs.

Now suppose that $a \geq 2$. First suppose that there exists $j \in \{1, \ldots ,a\}$ such that $G-x_j$ is connected (note that in the case $a=1$, $G-x_1$ is always disconnected). We let Alice and Bob play an A- or AB-independence coloring game on the graph $G-x_j$. If Alice colors a vertex $v \in V(G-x_j) \backslash \{z,x,x_1, \ldots , x_{j-1},x_{j+1}, \ldots ,x_a\}$ in her first move, then $d_{G-x_j}(v,z) \geq 3$; if Alice colors $x_i$, $i \in \{1, \ldots ,a\} \backslash \{j\}$, in her first move, then $d_{G-x_j}(x_i,y_i)=3$; if Alice colors $x$ in her first move, then $d_{G-x_j}(x,x')=3$; if Alice colors $z$ in her first move, then $d_{G-x_j}(z,y')=3$. Considering all possibilities, and using Lemma~\ref{distance}, at least three different colors are needed to finish the game on $G-x_j$, which is a contradiction. Therefore, we may assume that $G-x_i$ is disconnected for every $i \in \{1, \ldots ,a\}$. By the structure of $G$, $G-y_i$ is connected for every $i \in \{1, \ldots ,a\}$ ($y_i$ is an endvertex of the path $L_i$). Now suppose that for some $j \in \{1, \ldots ,a\}$ there exists an $h \in \{1, \ldots ,a\}$, $h \neq j$, such that $x_jy_h \notin E(G)$. Then $d_G(x_j,y_h) \geq 3$. Suppose that Alice and Bob play an A- or AB-independence coloring game on the graph $G-y_j$. If Alice colors a vertex $v \in V(G-y_j) \backslash \{z,x,x_1, \ldots ,x_a\}$ in her first move, then $d_{G-y_j}(v,z) \geq 3$; if Alice colors $x_i$, $i \in \{1, \ldots ,a\} \backslash \{j\}$, in her first move, then $d_{G-y_j}(x_i,y_i)=3$; if Alice colors $x_j$ in her first move, then $d_{G-y_j}(x_j,y_h) \geq 3$; if Alice colors $x$ in her first move, then $d_{G-y_j}(x,x')=3$; if Alice colors $z$ in her first move, then $d_{G-y_j}(z,y')=3$. Considering all possibilities, and using Lemma~\ref{distance}, at least three different colors are needed to finish the game on $G-y_j$, which is a contradiction. Thus we may assume that for every $i \in \{1, \ldots ,a\}$ and every $j \in \{1, \ldots ,a\}$, $j \neq i$, $x_iy_j \in E(G)$. This means that if $a \geq 3$, then $G-x_i$ is connected for every $i \in \{1, \ldots ,a\}$, which is a contradiction, since we assumed that $G-x_i$ is disconnected for every $i \in \{1, \ldots ,a\}$. Hence, $a=2$. If $T \neq \emptyset$, then we remove a vertex $u \in T$ such that the graph $G-u$ remains connected. Suppose that Alice and Bob play an A- or AB-independence coloring game on the graph $G-u$. If Alice colors a vertex $v \in V(G-u) \backslash \{z,x,x_1,x_2\}$ in her first move, then $d_{G-u}(v,z) \geq 3$; if Alice colors $x_1$ in her first move, then $d_{G-u}(x_1,y_1)=3$; if Alice colors $x_2$ in her first move, then $d_{G-u}(x_2,y_2)=3$; if Alice colors $x$ in her first move, then $d_{G-u}(x,x')=3$; if Alice colors $z$ in her first move, then $d_{G-u}(z,y')=3$. Considering all possibilities, and using Lemma~\ref{distance}, at least three different colors are needed to finish the game on $G-u$, which is a contradiction. Therefore, $T = \emptyset$. In this case $V(G)=\{z,x,x_1,x_2,y_1,y_2,y',x'\}$, and obeying all the given adjacencies, we get $G=C_4^+$, which is one of the $3$-$\cigA$-lower-game-vertex-critical ($3$-$\cigAB$-lower-game-vertex-critical) graphs.

\medskip

\textbf{Subcase 3.3:} $\ell = 0$.\\
We see that $G-x$ must be a connected and bipartite graph. We again denote with $X=X_1$ and $Y=Y_1$ both partite sets of $G-x$. If $x$ is adjacent to vertices of both partite sets $X$ and $Y$ in $G$, then $G$ must contain an odd cycle. Let $C_{2n+1}$, $n \geq 1$, be a smallest odd (induced) cycle in $G$. If $G$ contains $C_{2n+1}$ as a proper subgraph, then it is not a $3$-$\cigA$-lower-game-vertex-critical ($3$-$\cigAB$-lower-game-vertex-critical) graph. If we remove a vertex in $G$ that does not belong to $C_{2n+1}$, then at least three colors will be required to complete the game. Therefore, we may assume that $G=C_{2n+1}$. If $n=1$ or $n=2$, then $G=C_3$ and $G=C_5$ are $3$-$\cigA$-lower-game-vertex-critical ($3$-$\cigAB$-lower-game-vertex-critical) graphs. Let us consider the remaining case $n \geq 3$, and denote $G=C_{2n+1}=v_1v_2\ldots v_{2n+1}$, where $v_iv_{i+1} \in E(G)$ for all $i \in \{1, \ldots , 2n+1\}$ (modulo $2n+1$). Assume that Alice and Bob play an A- or AB-independence coloring game on $G-v_{2n+1}$, and suppose that Alice colors vertex $v_i$. Because of symmetry we may assume that $i \leq n$. Clearly, $v_i$ ($v_{i+1}$) can not be adjacent to $v_{i+2}$ ($v_{i+3}$), since $G$ would contain $C_3$ as a subgraph, which is not possible. Also, $v_i$ can not be adjacent to $v_{i+3}$, because $v_1 \ldots v_iv_{i+3} \ldots v_{2n+1}$ would form a $(2n-1)$-cycle in $G$, which is also not possible, since $C_{2n+1}$ is a smallest cycle in $G$. This means that $d_G(v_i,v_{i+3})=3$, and by Lemma~\ref{distance}, at least three different colors are needed to finish the game on $G$. Therefore, $G=C_{2n+1}$, $n \geq 3$, is not a $3$-$\cigA$-lower-game-vertex-critical ($3$-$\cigAB$-lower-game-vertex-critical) graph. 

What remains to consider is the case, when $x$ is adjacent to vertices of only one partite sets $X$ and $Y$, say $X$. In this case, $G$ is clearly also a bipartite graph, and we may consider the vertex $x$ as one of the vertices which belong to the partite set $Y$. Let us denote with $x_1, \ldots ,x_a$, $a \geq 1$, the vertices in $X \backslash \{x'\}$ that are adjacent to $x$ (note that again $a=0$ is not possible since $G$ is connected). Since $G$ is bipartite and $\cigA(G)=\cigAB(G)=3$, $x_i$'s can not be dominating vertices. However, since $G-x$ is also a bipartite graph, and we want $\cigA(G-x)=\cigAB(G-x)=2$, $G-x$ must by Theorem~\ref{izrek_dva} have a dominating vertex. Since we can consider the vertex $x$ as one of the vertices which belong to the partite set $Y$, vertices of $Y$ can not be dominating vertices of $G-x$, for otherwise a dominating vertex of $G-x$ would also be a dominating vertex of $G$, which is a contradiction to the fact that $\cigA(G)=\cigAB(G)=3$. Therefore, there must exist a vertex in $X$ which is not adjacent to $x$ and is adjacent to every vertex of $Y$. Without loss of generality let that be our predefined vertex $x' \in X$. Since $x_i$'s are not dominating vertices in $G$, there exists $y_i \in Y$ such that $x_iy_i \notin E(G)$ for every $i \in \{1, \ldots ,a\}$, and we have $d_G(x_i,y_i) \geq 3$ for every $i \in \{1, \ldots ,a\}$. Note that some of the vertices $y_i$'s might represent the same vertex. Moreover, since $x' \in X$ is a dominating vertex, it follows that $d_G(x_i,y_i)=3$ for every $i \in \{1, \ldots ,a\}$ and hence, there exists a path $L_i$, $|V(L_i)|=3$, between vertices $x_i$ and $y_i$ for every $i \in \{1, \ldots ,a\}$. Similarly, we observe that $d_G(x,x')=3$. Namely, since $xx'\notin E(G)$ and $G$ is bipartite, we have $d_G(x,x')\geq 3$. Since $x'$ is a dominating vertex in $G-x$ and $a\neq 0$, there exists a vertex $y'\in V(G-x)$ such that $x'y'\in E(G-x)$ and $y'x_1\in E(G-x)$. We can also observe that if there exists a vertex $v \in Y \backslash \{y_1, \ldots ,y_a\}$ such that $G-v$ is connected, then $\cigA(G-v)=\cigAB(G-v) \geq 3$. Namely, if Alice colors a vertex $w \in V(G-v) \backslash \{x,x',x_1, \ldots ,x_a, y_1, \ldots y_a\}$, $w\in Y$, in her first move, then there exists a vertex $x''\in X$ such that $d_{G-v}(w,x'') \geq 3$ for otherwise $w$ would be a dominating vertex in $G$, which contradicts the assumption that the vertices of $Y$ cannot be dominating vertices in $G$; if Alice colors a vertex $w \in V(G-v) \backslash \{x,x',x_1, \ldots ,x_a, y_1, \ldots y_a\}$, $w\in X$, in her first move, then $d_{G-v}(w,x) \geq 3$; if Alice colors $x$ or $x'$ in her first move, then $d_{G-v}(x,x')=3$; if Alice colors $x_i$ or $y_i$, $i \in \{1, \ldots ,a\}$, in her first move, then $d_{G-v}(x_i,y_i)=3$. Considering all possibilities, and using Lemma~\ref{distance}, at least three different colors are needed to finish the game on $G-v$, which is a contradiction. From now on we may assume that $G-v$ is disconnected for every $v \in Y \backslash \{y_1, \ldots ,y_a\}$. We split the proof with respect to the positive integer $a \geq 1$.

Suppose first that $a=1$. Observe that there exists a vertex $u \in X$ such that $d_G(y',u)=3$ (otherwise $y'$ would be a dominating vertex, which is a contradiction since there are no dominating vertices in $Y$).

First consider the case when $y_1u\in E(G)$. If there exists a vertex $y'' \in Y \backslash \{y_1,y'\}$, then we assumed that $G-y''$ is disconnected. This means that $y''$ has a pendant neighbor $x'' \in X$. Namely, if there existed a vertex $v \in Y$, $v \neq y''$, such that $x''v \in E(G)$, then $G-y''$ would be connected because $x'$ is adjacent to every vertex of $Y$. Since $x''$ is a pendant vertex in $G$, $G-x''$ is connected. Similarly, if there is a vertex of $Y \backslash \{y_1,y'\}$ that is adjacent to $x_1$, then $y'$ must also have a pendant neighbor in $X$, since we assumed that $G-y'$ is disconnected. Suppose that Alice and Bob play an A- or AB-independence coloring game on the graph $G-x''$. If Alice colors a vertex $w \in V(G-x'') \backslash \{x,x_1,y',x',y_1,u\}$, $w \in Y$, in her first move, then either $d_{G-x''}(w,x_1)=3$ (if $w$ and $x_1$ are not adjacent) or $w$ has distance $3$ to a pendant neighbor of $y'$ (such a neighbor exists if $w$ and $x_1$ are adjacent); if Alice colors a vertex $w \in V(G-x'') \backslash \{x,x_1,y',x',y_1,u\}$, $w\in X$, in her first move, then $d_{G-x''}(w,x) \geq 3$; if Alice colors $x$ or $x'$ in her first move, then $d_{G-x''}(x,x')=3$; if Alice colors $x_1$ or $y_1$ in her first move, then $d_{G-x''}(x_1,y_1)=3$; if Alice colors $y'$ or $u$ in her first move, then $d_{G-x''}(y',u)=3$. Considering all possibilities, and using Lemma~\ref{distance}, at least three different colors are needed to finish the game on $G-x''$, which is a contradiction. This means that we can consider only the case when $Y = \{y_1,y'\}$.  Hence, if $V(G) \backslash \{u,x,x_1,y_1,y',x'\} \neq \emptyset$, then there exists a vertex $v \in V(G) \backslash \{u,x,x_1,y_1,y',x'\}$ that also belongs to $X$, and since $Y = \{y_1,y'\}$, $G-v$ must be connected. Suppose that Alice and Bob play an A- or AB-independence coloring game on the graph $G-v$. If Alice colors a vertex $w \in V(G-v) \backslash \{x,x_1,y',x',y_1,u\}$, $w\in X$, in her first move, then $d_{G-v}(w,x) \geq 3$; if Alice colors $x$ or $x'$ in her first move, then $d_{G-v}(x,x')=3$; if Alice colors $x_1$ or $y_1$ in her first move, then $d_{G-v}(x_1,y_1)=3$; if Alice colors $y'$ or $u$ in her first move, then $d_{G-v}(y',u)=3$. Considering all possibilities, and using Lemma~\ref{distance}, at least three different colors are needed to finish the game on $G-v$, which is a contradiction. Therefore, $V(G)=\{x,x_1,y_1,y',x',u\}$, and obeying all the given adjacencies, we get $G=P_6$, which is one of the $3$-$\cigA$-lower-game-vertex-critical ($3$-$\cigAB$-lower-game-vertex-critical) graphs.

Next consider the case when $y_1u \notin E(G)$. Then there exists a vertex $y''\in Y$ such that $y''u\in E(G)$. If there also exists a vertex $y''' \in Y \backslash \{y_1,y',y''\}$, then we assumed that $G-y'''$ is disconnected. Analogues to the previous case, $y'''$ has a pendant neighbor $x'' \in X$. Similarly, if there is a vertex of $Y \backslash \{y_1,y',y''\}$ that is adjacent to $x_1$, then $y'$ must also have a pendant neighbor in $X$, since we assumed that $G-y'$ is disconnected. Suppose that Alice and Bob play an A- or AB-independence coloring game on the graph $G-x''$. If Alice colors a vertex $w \in V(G-x'') \backslash \{x,x_1,y',x',y_1,u\}$, $w \in Y$, in her first move, then either $d_{G-x''}(w,x_1)=3$ (if $w$ and $x_1$ are not adjacent) or $w$ has distance $3$ to a pendant neighbor of $y'$ (such a neighbor exists if $w$ and $x_1$ are adjacent); if Alice colors a vertex $w \in V(G-x'') \backslash \{x,x_1,y',x',y_1,u\}$, $w\in X$, in her first move, then $d_{G-x''}(w,x) \geq 3$; if Alice colors $x$ or $x'$ in her first move, then $d_{G-x''}(x,x')=3$; if Alice colors $x_1$ or $y_1$ in her first move, then $d_{G-x''}(x_1,y_1)=3$; if Alice colors $y'$ or $u$ in her first move, then $d_{G-x''}(y',u)=3$. Considering all possibilities, and using Lemma~\ref{distance}, at least three different colors are needed to finish the game on $G-x''$, which is a contradiction. This means that we can consider only the case when $Y = \{y_1,y',y''\}$. If $x_1y''\notin E(G)$ and $V(G) \backslash \{u,x,x_1,y_1,y',x',y''\} \neq \emptyset$, then there exists a vertex $v \in V(G) \backslash \{u,x,x_1,y_1,y',x',y''\}$ that also belongs to $X$ and since $Y = \{y_1,y',y''\}$, $G-v$ must be connected. Suppose that Alice and Bob play an A- or AB-independence coloring game on the graph $G-v$. If Alice colors a vertex $w \in V(G-v) \backslash \{x,x_1,y',x',y_1,u, y''\}$, $w\in X$, in her first move, then $d_{G-v}(w,x) \geq 3$; if Alice colors $x$ or $x'$ in her first move, then $d_{G-v}(x,x')=3$; if Alice colors $x_1$ or $y_1$ in her first move, then $d_{G-v}(x_1,y_1)=3$; if Alice colors $y'$ or $u$ in her first move, then $d_{G-v}(y',u)=3$; if Alice colors $y''$ in her first move, then $d_{G-v}(y'',x_1)=3$. Considering all possibilities, and using Lemma~\ref{distance}, at least three different colors are needed to finish the game on $G-v$, which is a contradiction. If $x_1y''\notin E(G)$ and $V(G)=\{u,x,x_1,y_1,y',x',y''\}$, then $G-y_1$ is the path $P_6$ and since $\cigA(P_6)=\cigAB(P_6)=3$, $G$ can not be a $3$-$\cigA$-lower-game-vertex-critical ($3$-$\cigAB$-lower-game-vertex-critical) graph. What remains to consider is the case $x_1y''\in E(G)$. Since we assumed that $G-y'$ must be disconnected, then $y'$ must have a pendant neighbor $x'' \in X$. If $V(G) \neq \{u,x,x_1,y_1,y',x',y'',x''\}$, then there exists a vertex $v \in V(G) \backslash \{u,x,x_1,y_1,y',x',y'',x''\}$ that also belongs to $X$, which means that $G-v$ is again connected. Suppose that Alice and Bob play an A- or AB-independence coloring game on the graph $G-v$. If Alice colors a vertex $w \in V(G-v) \backslash \{x,x_1,y',x',y_1,u, y'',x''\}$, $w\in X$, in her first move, then $d_{G-v}(w,x) \geq 3$; if Alice colors $x$ or $x'$ in her first move, then $d_{G-v}(x,x')=3$; if Alice colors $x_1$ or $y_1$ in her first move, then $d_{G-v}(x_1,y_1)=3$; if Alice colors $y'$ or $u$ in her first move, then $d_{G-v}(y',u)=3$; if Alice colors $y''$ or $x''$ in her first move, then $d_{G-v}(y'',x'')=3$. Considering all possibilities, and using Lemma~\ref{distance}, at least three different colors are needed to finish the game on $G-v$, which is a contradiction. Hence $V(G)=\{u,x,x_1,y_1,y',x',y'',x''\}$, and obeying all the given adjacencies, we get $G=C_4^+$, which is one of the $3$-$\cigA$-lower-game-vertex-critical ($3$-$\cigAB$-lower-game-vertex-critical) graphs.

Finally, let $a \geq 2$. The graph $G-x_i$ is connected for every $i \in \{1, \ldots ,a\}$, because every neighbor of $x_i$, except $x$, is adjacent to $x'$, and since $a \geq 2$, vertices $x$ and $x'$ are connected by at least two paths of order $3$, one going through $x_i$, $i \in \{1, \ldots ,a\}$, and the other through some $x_j$, $j \neq i$. If there exists a vertex $v \in Y \backslash \{y_1, \ldots ,y_a\}$, then we assumed that $G-v$ is disconnected. This means that $v$ has a pendant neighbor $u \in X$. Since the vertex $v \in Y$ can not be a dominating vertex in $G$, there exists a vertex $x'' \in X$ such that $vx'' \notin E(G)$. Let Alice an Bob play an A- or AB-independence coloring game on the graph $G-x_1$. Without loss of generality we may assume that $x'' \neq x_1$. If $x_1=x''$, then we let Alice an Bob play an A- or AB-independence coloring game on the graph $G-x_2$, and the proof goes along the same lines. If Alice colors a vertex $w \in V(G-x_1) \backslash \{v,x,x',x_2, \ldots ,x_a, y_1, \ldots ,y_a\}$, $w\in Y$, in her first move, then $d_{G-x_1}(w,u)=3$, since $w$ is not adjacent to $u$; if Alice colors a vertex $w \in V(G-x_1) \backslash \{v,x,x',x_2, \ldots ,x_a, y_1, \ldots ,y_a\}$, $w\in X$, in her first move, then $d_{G-x_1}(w,x) \geq 3$; if Alice colors $x$ or $x'$ in her first move, then $d_{G-x_1}(x,x')=3$; if Alice colors $x_i$ or $y_i$, $i \in \{2, \ldots ,a\}$, in her first move, then $d_{G-x_1}(x_i,y_i)=3$; if Alice colors $y_1$ in her first move, then $d_{G-x_1}(y_1,u)=3$, since $y_1$ is not adjacent to $u$; if Alice colors $v$ in her first move, then $d_{G-x_1}(v,x'')=3$. Considering all possibilities, and using Lemma~\ref{distance}, at least three different colors are needed to finish the game on $G-x_1$, which is a contradiction. We have thus seen that $Y=\{y_1, \ldots ,y_a\}$. If $V(G) \backslash \{x,x',x_1, \ldots ,x_a,y_1, \ldots ,y_a\} \neq \emptyset$, then there exists a vertex $u \in V(G) \backslash \{x,x',x_1, \ldots ,x_a,y_1, \ldots ,y_a\}$ that also belongs to $X$ and since $Y=\{y_1, \ldots ,y_a\}$, $G-u$ must be connected. Suppose that Alice and Bob play an A- or AB-independence coloring game on the graph $G-u$. If Alice colors a vertex $w \in V(G-u) \backslash \{x,x',x_1, \ldots ,x_a,y_1, \ldots ,y_a\}$, $w\in X$, in her first move, then $d_{G-u}(w,x) \geq 3$; if Alice colors $x$ or $x'$ in her first move, then $d_{G-u}(x,x')=3$; if Alice colors $x_i$ or $y_i$, $i \in \{1, \ldots ,a\}$, in her first move, then $d_{G-u}(x_i,y_i)=3$. Considering all possibilities, and using Lemma~\ref{distance}, at least three different colors are needed to finish the game on $G-u$, which is a contradiction. What remains to consider is the case $V(G)=\{x,x',x_1, \ldots ,x_a,y_1, \ldots ,y_a\}$. If there exists indices $j,h \in \{1, \ldots ,a\}$, $j \neq h$, such that $x_hy_j \notin E(G)$, then we let Alice and Bob play an A- or AB-independence coloring game on the graph $G-x_j$. If Alice colors $x$ or $x'$ in her first move, then $d_{G-x_j}(x,x')=3$; if Alice colors $x_i$ or $y_i$, $i \in \{1, \ldots ,a\} \backslash \{j\}$, in her first move, then $d_{G-x_j}(x_i,y_i)=3$; if Alice colors $y_j$ in her first move, then $d_{G-x_j}(x_h,y_j)=3$. Considering all possibilities, and using Lemma~\ref{distance}, at least three different colors are needed to finish the game on $G-x_j$, which is a contradiction. Finally, we have that $x_hy_j \in E(G)$ for every two indices $j,h \in \{1, \ldots ,a\}$, $j \neq h$, and consequently all $y_i$'s are distinct (we have $a$ of them). Obeying all adjacencies in $G$ and $V(G)=\{x,x',x_1, \ldots ,x_a,y_1, \ldots ,y_a\}$, we get $G=K_{a+1,a+1} - \mathcal{M}$, where $\mathcal{M}$ is a perfect matching in $G$. We know that $\cigA(G)=\cigAB(G)=a+1$~\cite{db-2021}, and since we have $\cigA(G)=\cigAB(G)=3$, it follows that $a=2$. Thus, $G=K_{3,3} - \mathcal{M}$, which is one of the $3$-$\cigA$-lower-game-vertex-critical ($3$-$\cigAB$-lower-game-vertex-critical) graphs.
\qed
\end{proof}

\section{Concluding remarks}

Despite many results presented in this paper, there is still a lot to research regarding game chromatic vertex-criticality. For instance, we did not give an answer to the question of existence of a $\chi_i$-game-vertex-critical graph $G$ such that the difference $\chi_i(G)-\chi_i(G-x)$, $x \in V(G)$, is arbitrarily large. However, we did prove that this difference is $1$ for $k$-$\chi_i$-lower-game-vertex-critical graphs when $k \in \{2,3,4\}$, which might lead us to think that this is true for all $k$-$\chi_i$-lower-game-vertex-critical graphs. Since we were not able to find any $\chi_i$-mixed-game-vertex-critical graphs, some weird behaviour might occur with them (if they even exist). To be more precise, the only $\varphi$-mixed-vertex-critical graphs we were able to find was for $\varphi=\cigAB$. Our first concluding thoughts lead us to propose the first two problems.

\begin{problem}
Does there exists a $\chi_i$-game-vertex-critical graph $G$ such that the difference $\chi_i(G)-\chi_i(G-x)$, $x \in V(G)$, is arbitrarily large?
\end{problem}

\begin{problem}
Find a $\varphi$-mixed-game-vertex-critical graph for $\varphi \in \{\chi_g,\chi_i, \cigA\}$.
\end{problem}

As for the $\chi_i$-upper-game-vertex-critical graphs, we constructed a graph in which the removal of a specific vertex causes the indicated chromatic number to increase, but unfortunately this graph is not vertex-critical. The constructed graph $G$ is shown in Figure~\ref{fig:twisted-graph}. It is easy to see, that $\chi_i(G)=3$. Namely, if Ann indicates vertices in the order $f,\ e,\ g,\ h,\ x,\ d,\ c,\ b,\ a$, Bob always has only one available color to properly color every selected vertex, and hence, she wins the indicated coloring game on $G$ using three colors. Moreover, the graph $G-x$ is well known under the name ``the twisted diamond'' and in \cite{gr-2012} it was proven that $\chi_i(G-x)=4$. For this reason we suspect that the $\chi_i$-upper-game-vertex-critical graphs do exist.

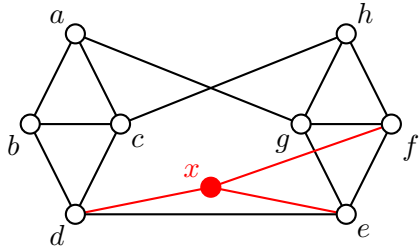
\begin{figure}[!ht]
\centering
\begin{tikzpicture}[scale=1.2, style=thick]
\def\vr{3pt}
\def\vm{0.5pt}
\def\len{1}

\coordinate(x) at (0,-0.7);
\coordinate(a) at (-1.5,1);
\coordinate(b) at (-2,0);
\coordinate(c) at (-1,0);
\coordinate(d) at (-1.5,-1);
\coordinate(e) at (1.5,-1);
\coordinate(f) at (2,0);
\coordinate(g) at (1,0);
\coordinate(h) at (1.5,1);

\draw[anchor = south east, red] (x) node {$x$};
\draw[anchor = south east] (a) node {$a$};
\draw[anchor =  north east] (b) node {$b$};
\draw[anchor = north west] (c) node {$c$};
\draw[anchor = north east] (d) node {$d$};
\draw[anchor = north west] (e) node {$e$};
\draw[anchor = north west] (f) node {$f$};
\draw[anchor =  north east] (g) node {$g$};
\draw[anchor = south west] (h) node {$h$};

\draw (a) -- (b) -- (c) -- (a);
\draw (h) -- (g) -- (f) -- (h);
\draw (a) -- (g);
\draw (c) -- (h);
\draw (b) -- (d);
\draw (c) -- (d);
\draw (d) -- (e);
\draw (g) -- (e);
\draw (f) -- (e);

\draw[red] (f) -- (x);
\draw[red] (d) -- (x);
\draw[red] (e) -- (x);

\draw[red](x)[fill=red] circle(\vr);
\draw(a)[fill=white] circle(\vr);
\draw(b)[fill=white] circle(\vr);
\draw(c)[fill=white] circle(\vr);
\draw(d)[fill=white] circle(\vr);
\draw(e)[fill=white] circle(\vr);
\draw(f)[fill=white] circle(\vr);
\draw(g)[fill=white] circle(\vr);
\draw(h)[fill=white] circle(\vr);
\end{tikzpicture}
\caption{Graph $G$ with $\chi_i(G)=3$ and $\chi_i(G-x)=4$}
\label{fig:twisted-graph}
\end{figure}

We found all $k$-$\varphi$-vertex-critical graphs for $k=2$, but solving this problem for $k \geq 3$ becomes considerably more challenging. Nevertheless, we were able to give a characterization of (connected) $3$-$\varphi$-lower-vertex-critical graphs, $\varphi \in \{\chi_g,\chi_i, \cigA, \cigAB\}$, with the crown jewel being Theorem~\ref{3-indep-lower}, which characterizes connected $3$-($\cigA$,$\cigAB$)-lower-vertex-critical graphs. However, this results still relies on the assumption ``lower''. The removal of the extra assumption leads us to our final problem.

\begin{problem}
Characterize (connected) $3$-$\varphi$-game-vertex-critical graphs for $\varphi \in \{\chi_g,\chi_i,\cigA,\cigAB\}$.
\end{problem}

\section*{Acknowledgements}

The authors acknowledge the financial support from the Slovenian Research Agency (research core funding No.\ P1-0297 and research projects J1-9109, J1-1693 and N1-0095).

\end{document}